\IfFileExists{proc-l.cls}{\documentclass{proc-l}}{\documentclass{amsproc}}

\usepackage[T1]{fontenc}
\usepackage[utf8]{inputenc}
\usepackage{amsmath,amssymb,amsthm,mathtools}
\usepackage[colorlinks=true,linkcolor=blue,citecolor=blue,urlcolor=blue]{hyperref}
\allowdisplaybreaks
\newtheorem{theorem}{Theorem}[section]
\newtheorem{proposition}[theorem]{Proposition}
\newtheorem{lemma}[theorem]{Lemma}

\theoremstyle{definition}

\theoremstyle{remark}

\newtheorem{example}[theorem]{Example}

\newcommand{\F}{\mathbb F}

\newcommand{\Mcal}{\mathcal M}
\newcommand{\Pcal}{\mathcal P}
\newcommand{\Rcal}{\mathcal R}

\newcommand{\CT}{\operatorname{CT}}
\newcommand{\rank}{\operatorname{rank}}
\newcommand{\ord}{\operatorname{ord}}

\newcommand{\rad}{\operatorname{rad}}

\title[Banded quadratic digit functions along irreducibles]{Banded quadratic digit functions along irreducible polynomials over finite fields}

\author{Kaimin Cheng}
\address{School of Mathematics and Information, China West Normal University, Nanchong, 637002, P. R. China}
\email{ckm20@126.com}

\subjclass[2020]{11T55, 11T23, 11L07, 11N36}
\keywords{finite fields, irreducible polynomials, Rudin--Shapiro function, quadratic forms, Vaughan identity, reciprocal polynomials, exponential sums}
\date{ }

\begin{document}

\begin{abstract}
Let $q$ be an odd prime power and let $\F_q$ be the finite field with $q$
elements.  Let $\Pcal(n)$ be the set of monic irreducible polynomials of degree
$n$ over $\F_q$.  For
$f=t^n+f_{n-1}t^{n-1}+\cdots+f_0\in\Pcal(n)$, fix coefficients
$c_0,\ldots,c_m\in\F_q$ with $c_m\ne0$ and put
\[
        Q_A(f)=\sum_{j=0}^m c_j\sum_{i=j}^n f_i f_{i-j}+\ell_n(f),
\]
where $\ell_n$ is an arbitrary linear form in the coefficients of $f$ and
$f_n=1$.  We prove that $Q_A$ is equidistributed on $\Pcal(n)$: for every
$\gamma\in\F_q$,
\[
        \#\{f\in\Pcal(n):Q_A(f)=\gamma\}
        =\frac{\#\Pcal(n)}{q}+O_A(q^{19n/20+o(n)}),
\]
as \(n\to\infty\), with \(q\) and the quadratic band fixed. This extends the finite-field Rudin--Shapiro result from nearest-neighbour correlations to arbitrary fixed symmetric Laurent symbols. The proof combines Vaughan's identity with rank estimates for Toeplitz forms; the main new
ingredient is an averaged rank-defect estimate for reciprocal symbols in the
central Type I range.
\end{abstract}

\maketitle

\section{Introduction}

Digital questions in arithmetic ask how restrictions on the digits of an
integer, or on the coefficients of a polynomial, interact with multiplicative
sets such as primes.  In the integer setting this theme goes back to problems
of Gelfond on the sum of digits \cite{Gelfond,RivatGelfond}.  A landmark result
of Mauduit and Rivat is the prime number theorem for the sum-of-digits function
\cite{MauduitRivat}; they also treated the classical Rudin--Shapiro sequence
along the primes \cite{MauduitRivatRS}.  Over a finite field, the polynomial
ring $\F_q[t]$ gives a parallel setting: the coefficients of a polynomial are
its digits, and monic irreducible polynomials play the role of primes.  Related
coefficient-prescription and digital-distribution problems have been studied in
\cite{Pollack,Ha,Porritt,Swaenepoel,DartygeSarkozy,DartygeMeraiWinterhof,MakhulWinterhof}.

The closest predecessor of this paper is M\'erai's recent work on the
finite-field Rudin--Shapiro function along irreducible polynomials
\cite{Merai}.  For
\[
        R(f)=\sum_{i=1}^{n-1}f_i f_{i-1},
\]
M\'erai proved asymptotic equidistribution on monic irreducibles, with a
Vaughan-type exponent $27/28$.  The present paper considers the corresponding
question for an arbitrary fixed finite band of quadratic coefficient
correlations.  The novelty is concentrated in three points.  First, the
quadratic symbol is an arbitrary fixed Laurent polynomial of finite bandwidth,
rather than only the nearest-neighbour symbol.  Secondly, the Type II estimate
is obtained by a Laurent-symbol rank argument.  Thirdly, the Type I estimate is
split according to the degree of the fixed multiplier.  The two outer ranges are
handled by direct rank bounds.  In the remaining central range we prove an
averaged rank-defect estimate by enlarging the symbols $Pgg^*$ to the full
linear space of reciprocal polynomials and counting a bilinear incidence.  This
linearization is the device that replaces any pointwise rank estimate in the
central range.

Let $\F_q$ be the finite field of $q$ elements, with $q$ odd.  Let $\Mcal(n)$
denote the set of monic polynomials of degree $n$, and let $\Pcal(n)$ denote
the set of monic irreducible polynomials of degree $n$.  For a polynomial
$f=f_nt^n+\cdots+f_0$ of degree at most $n$, and for $j\ge0$, define
\[
        S^{(j)}(f)=\sum_{i=j}^{n}f_if_{i-j},
\]
where missing coefficients are interpreted as zero.  When
$f\in\Mcal(n)$ or $f\in\Pcal(n)$, one has $f_n=1$.
Fix an integer $m\ge0$ and coefficients $c_0,\ldots,c_m\in\F_q$ with
$c_m\ne0$.  For each degree $n$ we allow an arbitrary linear digit form
$\ell_n$ in the coefficients $f_0,\ldots,f_n$; the estimates below are uniform
in this choice.  This uniformity is useful in applications and follows from the
fact that the linear part only changes the linear phase of the exponential sums,
never their polar ranks.  To lighten notation we write $\ell$ for $\ell_n$ and set
\begin{equation}\label{eq:Qdef}
        Q_A(f)=\sum_{j=0}^{m}c_j S^{(j)}(f)+\ell(f).
\end{equation}
The quadratic part is encoded by the symmetric Laurent polynomial
\begin{equation}\label{eq:Adef}
        A(z)=c_0+\frac12\sum_{\ell=1}^{m}c_\ell(z^\ell+z^{-\ell})\in\F_q[z,z^{-1}],
\end{equation}
and we put
\begin{equation}\label{eq:Pdef}
        P(z)=z^mA(z)\in\F_q[z].
\end{equation}
Throughout the paper $A$, equivalently $P$, is fixed and satisfies
\[
        P(0)\ne0,\qquad \deg P=2m.
\]

\begin{theorem}\label{thm:main}
For every $\gamma\in\F_q$, and uniformly in the linear digit form $\ell_n$,
\[
        \#\{f\in\Pcal(n):Q_A(f)=\gamma\}
        =\frac{\#\Pcal(n)}q+O_A\bigl(q^{19n/20+o(n)}\bigr),
\]
as $n\to\infty$ with $q$ and $A$ fixed.
\end{theorem}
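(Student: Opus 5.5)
Writing $\psi$ for the canonical additive character of $\F_q$ and expanding the indicator of $Q_A(f)=\gamma$ by orthogonality, it suffices to show that for every $a\in\F_q^*$
\[
        \sum_{f\in\Pcal(n)}\psi\bigl(aQ_A(f)\bigr)\ll_A q^{19n/20+o(n)},
\]
uniformly in $\ell$.

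\textbf{Reduction to a von Mangoldt sum.} Passing from $\Pcal(n)$ to the von Mangoldt-weighted sum $\sum_{f\in\Mcal(n)}\Lambda(f)\psi(aQ_A(f))$ costs only $O(q^{n/2})$ from prime powers. I then apply Vaughan's identity in $\F_q[t]$ with parameters $U=V=q^{u}$, with $u$ to be optimized. This splits the sum into:
\begin{itemize}
\item Type I sums $\sum_{\deg g\le k}\alpha_g\sum_{h\in\Mcal(n-k)}\psi(aQ_A(gh))$, with $k\le 2u$;
\item Type II bilinear sums $\sum_{g,h}\alpha_g\beta_h\psi(aQ_A(gh))$, with $\deg g$ in $[u,n-u]$.
\end{itemize}

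\textbf{Structural identity.} The basic observation is that $\sum_j c_jS^{(j)}(f)$ is the constant term $\CT\bigl(A(z)f(z)f(z^{-1})\bigr)$, up to the factor convention in \eqref{eq:Adef}. Hence for $f=gh$,
\[
        Q_A(gh)=\CT\bigl(A\,g g^{*}\,h h^{*}\bigr)+\text{linear},
\]
where $g^*(z)=g(z^{-1})$. For fixed $g$ this is a quadratic form in the coefficients of $h$. Its matrix is the Toeplitz matrix of the Laurent symbol $A gg^*$, of bandwidth $m+\deg g$, truncated to size $n-k+1$. The linear form $\ell$ and the monic constraint only shift the linear phase. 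So a complete sum over $h$ has absolute value either $0$ or $q^{(n-k+1)-r/2}$, where $r$ is the rank of the Toeplitz form.

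\textbf{Type II.} I use Cauchy--Schwarz in $h$ and expand the square. The phase becomes $\psi$ of a bilinear form in $(h,h')$, namely $\CT(A(gg^*-g'g'^*)hh'^*)$ plus a linear term. Summing over $h$ yields a count of $h'$ in the kernel of a Toeplitz/Hankel map with symbol $A(gg^*-g'g'^*)$. I bound that kernel dimension by a Laurent-symbol rank lemma: a banded Toeplitz matrix of size $N$ with symbol of degree span $d$ has rank at least $N-d$. Then I count pairs $(g,g')$ for which $A(gg^*-g'g'^*)$ has small support. Since $P(0)\ne0$ and the relevant map on reciprocal polynomials is essentially injective up to $O(1)$ ambiguity, the symbol vanishes for only $O(q^{\deg g/2})$-type many coincidences. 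This gives a saving of the form $q^{-\min(\deg g,\,n-\deg g)/4}$.

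\textbf{Type I.} I split by $k=\deg g$.
\begin{itemize}
\item For $k$ small, the Toeplitz form of $Agg^*$ of size $n-k$ has rank at least $n-k-O(k+m)$. This already gives a saving of $q^{-(n-3k)/2}$.
\item For $k$ close to $n$, the sum over $h$ is short. I instead swap roles, or bound trivially using the rank in the $g$-variable.
\end{itemize}

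\textbf{Main obstacle: the central Type I range.} The central range, where neither bound applies, is the expected main difficulty. A given $g$ may make $Agg^*$ have large rank defect, since the Toeplitz matrix can have a large kernel when the symbol has special factorization. The plan is to average over $g$. The rank defect of the truncated Toeplitz form equals the dimension of the space of $h$ such that $Pgg^*h$ reduced into a window vanishes, up to boundary terms. So I bound $\sum_g q^{\mathrm{defect}(g)/2}$ by Cauchy--Schwarz via the number of pairs $(g,h)$ with $\CT$-incidence $Pgg^*\cdot h\equiv 0$ on a window. I then enlarge $\{Pgg^*\}$ to the full linear space of reciprocal polynomials of degree $2m+2k$, which has dimension $\approx m+k$. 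Counting the bilinear incidence between this linear space and the $h$ is then a linear-algebra count: it is the number of solutions of a bilinear system, bounded by the sum over $h$ of $q^{\dim\ker}$. The fibre of $g\mapsto gg^*$ has size $O(1)$, so the enlargement loses a factor of about $q^{k/2}$ at most. I would check that this loss is affordable in the central range.

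\textbf{Optimization.} Balancing the resulting three Type I savings against the Type II saving $q^{-u/4}$ and the choice of $u$ should produce the exponent $19/20$. The $o(n)$ absorbs divisor-function and $\log$ factors, and the constants depend only on $A$.
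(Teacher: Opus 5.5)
\textbf{Verdict: the proposal has a genuine gap in Type II, and the central Type I step is left incomplete.}

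\emph{The gap: Type II with symmetric cutoffs.} With $U=V$, the Type II range $\deg g\in[u,n-u]$ always contains the balanced case $\deg g\approx\deg h\approx n/2$. After Cauchy--Schwarz, the inner sum over $h\in\Mcal(i)$ has Toeplitz symbol $A(z)\bigl(g(z)g(z^{-1})-g'(z)g'(z^{-1})\bigr)$, supported in $[-(k+m),k+m]$ with $k=n-i$. The extreme-diagonal minor only gives $\rank\ge i-k-m+1$, which is vacuous when $i\approx k$.

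Counting pairs by support size does not rescue this. The coefficient of $z^{k+m}$ is $\tfrac{c_m}{2}(g(0)-g'(0))$, so a generic pair already has maximal support. Also, the diagonal $gg^*=g'g'^*$ has at most $\tau(g)=q^{o(n)}$ solutions per $g$, not $O(q^{\deg g/2})$. Done correctly, the pointwise argument saves roughly $q^{-\min(k,\,n/2-k)/2}$ with $k=\min(\deg g,n-\deg g)$. That saving vanishes at $k=n/2$, so your uniform $q^{-u/4}$ is not available.

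The paper avoids the balanced case with asymmetric cutoffs $u=\lfloor n/5\rfloor$, $v=\lfloor 7n/10\rfloor$. The inner Type II variable then always has degree $i\ge v>n/2$, which gives $\Sigma_2\ll q^{n-u}+q^{3n/2-v}$. The price is that Type I must be controlled for all $k\le u+v=9n/10$. That is why three Type I ranges are needed, including $3n/5\le k\le 9n/10$, where Cauchy with $g$ as the dual variable gives $q^{5n/4-k/2}+q^{n/2+k/2}$. If you keep $U=V$, you would need an averaged rank-defect estimate for the Type II symbols as well, and your sketch does not supply one.

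\emph{The central Type I range.} Your plan (enlarge $\{Pgg^*\}$ to the reciprocal space $\Rcal_d$ and count incidences) is the paper's idea. However, the step that makes it close is missing, and your bookkeeping would break it.
\begin{itemize}
\item The enlargement is essentially free. $\dim\Rcal_{\kappa+m}=\kappa+m+1$, against about $q^{\kappa}$ polynomials $g$, and the fibres of $g\mapsto Pgg^*$ are bounded by a divisor function, i.e.\ $q^{o(n)}$ (not $O(1)$).
\item A genuine $q^{k/2}$ loss, as you estimate, would be fatal. The incidence count only saves $q^{N/8}$ over the trivial bound $q^{\kappa+N/2}$ for $\sum_g q^{\Delta_A(g;N)/2}$.
\item That saving rests on a lemma you do not state. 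If $H=z^\nu H_0$ with $H_0(0)\ne0$ and $R=\deg H_0\le\frac43 d+O(1)$, then $T\mapsto([z^{d+j}]TH)_{0\le j\le N}$ has rank at least $R/4-O(1)$ on $\Rcal_d$. This comes from a triangular minor with diagonal $h_R$ on the coordinates $x_{R-j}$, $\max(0,R-d)\le j<R/2$.
\item The lemma yields $\sum_{T}q^{\Delta(T;N)}\ll q^{d+3N/4+o(n)}$. Hence $\sum_g q^{\Delta_A(g;N)/2}\ll q^{\kappa+3N/8+o(n)}$, giving a Type I bound $q^{\kappa+7N/8}$, which is at most $q^{19n/20}$ precisely when $k\le 3n/5$.
\item Because the lemma needs $N\le\frac43\kappa$, you must strip $z^{r}$ from $g$ (it drops out of the symbol). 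Small $\kappa$ must then be handled by the pointwise bound, as the paper does with its split at $\kappa_0=3n/10+k/3$.
\end{itemize}

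For that small-$\kappa$ part, and for $k\le 9n/20$, you need the sharp extreme-diagonal bound $\rank\ge N-k-m+1$, i.e.\ saving $q^{-(n-2k)/2}$. Your $q^{-(n-3k)/2}$ would leave part of $3n/10<k<9n/20$ uncovered. As written, the exponent $19/20$ is asserted rather than derived.
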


We now describe the proof in a little more detail.  Vaughan's identity over
$\F_q[t]$ reduces the von Mangoldt-weighted exponential sum
\[
        \sum_{f\in\Mcal(n)}\Lambda_{\rm vM}(f)\psi(Q_A(f))
\]
to the two sums displayed in Lemma \ref{lem:vaughan} below.  Following the
standard terminology, we call them the Type I and Type II sums.  The Type II
sum is bilinear: after Cauchy's inequality it involves expressions of the form
\[
        \sum_h \psi(Q_A(hg_1)-Q_A(hg_2)).
\]
Its rank is controlled by the Laurent symbol
\[
        A(z)(g_1(z)g_1(z^{-1})-g_2(z)g_2(z^{-1}));
\]
a non-zero extreme coefficient gives a triangular minor.  The only exceptional
case is the reciprocal equation $g_1^*g_1=g_2^*g_2$, which is handled by the
usual divisor bound in $\F_q[t]$.

The Type I contribution is the one-factor part of the
Vaughan decomposition. For small multiplier
degree it is handled by a pointwise rank bound for the quadratic form
$h\mapsto Q_A(gh)$, and for large multiplier degree it is handled by a Cauchy
argument in which the dual variable is $g$.  These two arguments cover the
ranges
\[
        k\le 9n/20+O_A(1)\qquad\hbox{and}\qquad
        k\ge 3n/5-O_A(1).
\]
The only remaining range is the central band
$9n/20<k<3n/5$.  In this range a pointwise rank estimate is not strong enough.
Instead we average the radical dimensions directly.  The key observation is
that, after the zero factor of $g$ is removed, the radical condition depends on
$g$ only through the reciprocal polynomial $Pgg^*$.  We enlarge these symbols to
all reciprocal polynomials of the same degree.  For a fixed auxiliary polynomial
$H$, the condition that the middle coefficients of $TH$ vanish is then linear in
the reciprocal symbol $T$.  A simple endpoint-rank lemma for this linear map
shows that the total incidence is small enough to control
$\sum_g q^{\Delta_A(g;N)/2}$ throughout the central band.

\section{Preliminaries and quadratic sums}\label{sec:prelim}

Throughout the paper $q$ and $A$ are fixed.  All implied constants may depend on this fixed field and on $A$; we write $O_A$ for simplicity.  The notation $q^{o(n)}$ is used uniformly for all auxiliary integers in the ranges occurring below, in particular $0\le k,N\le n+O_A(1)$ and for the Vaughan cutoffs $u,v$.  Polynomial factors in $n$ and constants depending only on $A$ are absorbed into $q^{o(n)}$.

Let
\[
        V_N=\{h\in\F_q[t]:\deg h\le N\}.
\]
For a polynomial $a=a_0+\cdots+a_kt^k$ with $a_k\ne0$, write
\[
        a^*(t)=t^ka(t^{-1}).
\]
We write \([z^j]F(z)\) for the coefficient of \(z^j\) in \(F(z)\), and
\(\CT F\) for its constant term. The quadratic part of $Q_A$ can be written as
\begin{align}\label{quadratic_part} Q_{A,2}(f)=\CT\,A(z)f(z)f(z^{-1}).
\end{align}
We also use the half-polar form
\begin{equation}\label{eq:polar}
        B_A(f,h)=\CT\,A(z)f(z)h(z^{-1}).
\end{equation}
Since $A(z)=A(z^{-1})$ and $q$ is odd, the usual polar form is $2B_A$; the
factor $2$ is non-zero and has no effect on ranks.  In the sequel, all ranks are
ranks of these polar bilinear forms, and $\psi$ denotes a non-trivial additive
character of $\F_q$.  All estimates are uniform in the choice of non-trivial
$\psi$: replacing $\psi$ by $\psi(a\,\cdot)$ with $a\in\F_q^\times$ only
multiplies the quadratic and linear phases by $a$ and does not change any of
the relevant ranks.

\begin{lemma}\label{lem:quad}
Let $Q$ be a quadratic form on $\F_q^N$ with polar rank at least $r$, and let
$M$ be linear. Then
\[
\left|\sum_{x\in\F_q^N}\psi(Q(x)+M(x))\right|\le q^{N-r/2}.    
\]
\end{lemma}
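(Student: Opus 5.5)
The plan is the standard Weyl-differencing (squaring) argument. Write $S=\sum_{x\in\F_q^N}\psi(Q(x)+M(x))$ and expand $|S|^2=S\overline{S}$ as a double sum over $x,y\in\F_q^N$. Substituting $x=y+u$, the phase becomes
\[
Q(y+u)-Q(y)+M(u)=Q(u)+M(u)+\beta(y,u),
\]
where $\beta(y,u)=Q(y+u)-Q(y)-Q(u)$ is the (symmetric) polar bilinear form of $Q$. For fixed $u$ the map $y\mapsto\beta(y,u)$ is a linear functional on $\F_q^N$. Hence
\[
|S|^2=\sum_{u}\psi(Q(u)+M(u))\sum_{y}\psi(\beta(y,u)).
\]

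Next, use orthogonality of additive characters. The inner sum equals $q^N$ if the functional $\beta(\cdot,u)$ vanishes identically, and $0$ otherwise. Note that $\psi\circ\lambda$ is a nontrivial character of $\F_q^N$ whenever the linear functional $\lambda$ is nonzero, since $\psi$ is nontrivial and $\lambda$ is onto $\F_q$. Therefore only $u$ in the radical $\rad\beta=\{u:\beta(y,u)=0\ \forall y\}$ contribute. Bounding each remaining outer phase trivially by $1$ gives
\[
|S|^2\le q^N\cdot\#\rad\beta = q^N\cdot q^{N-\rank\beta}\le q^{2N-r}.
\]
Taking square roots yields $|S|\le q^{N-r/2}$.

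There is no real obstacle here. The only points to check are that the linear term $M$ cancels up to $M(u)$, and that "polar rank" means the rank of $\beta$, so that the radical has dimension $N-\rank\beta$. Both are immediate.
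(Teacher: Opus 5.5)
Your proof is correct and is essentially the paper's argument: square $S$, shift the variable, use orthogonality of $\psi$ to restrict the sum to the radical of the polar form, and conclude $|S|^2\le q^N\#\rad\le q^{2N-r}$. The only cosmetic difference is that the paper writes the cross term as $2B(x,h)$ with $B$ the half-polar form. Since $q$ is odd, $2B$ has the same radical as your $\beta$, so the two versions coincide.
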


\begin{proof}
Put
\[        S=\sum_{x\in\F_q^N}\psi(Q(x)+M(x))
\]
and let $B$ be the half-polar form, so that
$Q(x+h)-Q(x)=Q(h)+2B(x,h)$.  Let
\[
        \rad(B)=\{x\in\F_q^N: B(x,y)=0\ \text{for all }y\in\F_q^N\}
\]
be the radical of $B$.  Since $q$ is odd, multiplication by $2$ does not
change $\rad(B)$. Thus, we have
\begin{align*}
 |S|^2
&=\sum_{h\in\F_q^N}\sum_{x\in\F_q^N}      \psi(Q(x+h)+M(x+h)-Q(x)-M(x))  \\ &=\sum_{h\in\F_q^N}\psi(Q(h)+M(h))      \sum_{x\in\F_q^N}\psi(2B(x,h)).
\end{align*}
For fixed $h$, the inner sum is the complete sum of the linear form
$x\mapsto 2B(x,h)$.  It is equal to $q^N$ if $h\in\rad(B)$ and to $0$ otherwise.
The rank hypothesis gives $\dim\rad(B)\le N-r$.  Hence
\[
        |S|^2\le q^N\#\rad(B)\le q^{2N-r},
\]
and the desired estimate follows after taking square roots.
\end{proof}

\begin{lemma}\label{lem:monic-slice}
Let $Q$ be a quadratic form on $V_N$ of polar rank $r$.  On any affine hyperplane of $V_N$, and in particular on $\Mcal(N)$, the restricted polar rank is at least $r-2$.  Consequently
\[
        \left|\sum_{h\in\Mcal(N)}\psi(Q(h)+M(h))\right|
        \ll q^{N-r/2+O(1)}.
\]
\end{lemma}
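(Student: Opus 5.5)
The plan is to parametrize the affine hyperplane, bound how much the polar rank can drop under restriction, and then feed the result into Lemma \ref{lem:quad}.

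\emph{Parametrization.} Write the affine hyperplane as $x_0+W$, where $W\subset V_N$ is a linear subspace of codimension one. For $\Mcal(N)$ we take $x_0=t^N$ and $W=V_{N-1}$. Every $h$ in the hyperplane is $h=x_0+w$ with $w\in W$, and
\[
Q(x_0+w)=Q(w)+2B(x_0,w)+Q(x_0).
\]
So, as a function of $w\in W$, the phase $Q(h)+M(h)$ equals $Q|_W(w)$ plus a linear form in $w$ plus a constant. The quadratic part is $Q|_W$, whose half-polar form is $B|_{W\times W}$.

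\emph{Rank step.} Let $G$ be the Gram matrix of $B$ in a basis of $V_N$ whose first $\dim W$ vectors span $W$. The Gram matrix of $B|_{W\times W}$ is obtained from $G$ by deleting one row and one column. Deleting a row lowers the rank by at most one, and so does deleting a column. Hence $\rank(B|_{W\times W})\ge r-2$; this is the claimed bound for the restricted polar rank. Since $q$ is odd, replacing $B$ by $2B$ does not change ranks.

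\emph{Apply Lemma \ref{lem:quad}.} We use it on $W\cong\F_q^{\dim W}$ with $\dim W=N$, since $\dim V_N=N+1$. The constant $\psi(Q(x_0)+M(x_0))$ has modulus one, so
\[
\Bigl|\sum_{h\in\Mcal(N)}\psi(Q(h)+M(h))\Bigr|\le q^{N-(r-2)/2}=q^{N-r/2+1}.
\]
This is $q^{N-r/2+O(1)}$. No step here is a real obstacle. The only care needed is to check that the linear term produced by the translation, $w\mapsto 2B(x_0,w)+M(x_0+w)$, is genuinely linear in $w$, so that Lemma \ref{lem:quad} applies; the constant part $M(x_0)$ is absorbed into the constant phase.
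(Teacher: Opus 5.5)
Your proof is correct and follows the paper's argument: you translate by a base point, note that the quadratic part on $W$ is $Q|_W$, show the polar rank drops by at most $2$, and apply Lemma~\ref{lem:quad} on the $N$-dimensional space $W$ to get $q^{N-r/2+1}$. The only difference is cosmetic: you get the rank drop by deleting one row and one column of the Gram matrix, whereas the paper bounds $\dim\rad(B|_W)\le\dim\rad(B)+1$ via the one-dimensional annihilator $W^\perp$, and these two facts are equivalent.
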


\begin{proof}
Let \(B\) be the polar form of \(Q\) on the ambient vector space \(V_N\), and let
\(W\) be the direction space of the affine hyperplane. Thus \(W\) has
codimension one in \(V_N\). The radical of the restricted form \(B|_W\) is
\[
        \rad(B|_W)=\{w\in W:B(w,x)=0\ \text{for all }x\in W\}.
\]
If \(w\in\rad(B|_W)\), then the linear functional $x\mapsto B(w,x)$
on \(V_N\) vanishes on \(W\). Hence it belongs to the annihilator
\[
        W^\perp=\{\lambda\in V_N^\ast:\lambda|_W=0\}.
\]
Since \(W\) has codimension one in \(V_N\), the annihilator \(W^\perp\) is
one-dimensional. Therefore the image of the linear map
\[
        \rad(B|_W)\longrightarrow W^\perp,\qquad
        w\longmapsto B(w,\cdot),
\]
has dimension at most one. Its kernel is precisely \(\rad(B)\cap W\).
Consequently
\[
        \dim\rad(B|_W)
        \le \dim(\rad(B)\cap W)+1
        \le \dim\rad(B)+1.
\]
Since \(\dim W=\dim V_N-1\), it follows that the rank of the polar form
drops by at most \(2\) after restriction to \(W\). Hence the restricted
polar rank is at least \(r-2\).

Now write the affine hyperplane as \(h_0+W\).  For \(h=h_0+w\), with
\(w\in W\), we have
\[        Q(h_0+w)+M(h_0+w)
=Q_W(w)+\widetilde M(w)+c,
\]
where \(Q_W(w)\) is the restriction of $Q$ to the direction space \(W\), \(\widetilde M\) is a linear form on \(W\), and
\(c\in\mathbb F_q\) is constant. The polar form
of \(Q_W\) is \(B|_W\), and hence has rank at least \(r-2\). Applying
Lemma~\ref{lem:quad} on the vector space \(W\), which has dimension \(N\),
gives
\begin{align*}\left|\sum_{h\in\Mcal(N)}\psi(Q(h)+M(h))\right|
        &=
        \left|\sum_{w\in W}\psi(Q_W(w)+M_W(w)+c)\right|
        \\
        &\le q^{N-(r-2)/2}
        =
        q^{N-r/2+1}.
\end{align*}
This is the asserted bound.
\end{proof}

We use the standard divisor estimate
\begin{equation}\label{eq:divisor}
        \tau(F)\le q^{o(\deg F)}
\end{equation}
for the number of monic divisors of $F\in\F_q[t]$, uniformly as $\deg F\to\infty$ with $q$ fixed.  This follows from the Euler product for polynomial divisors, or from the usual maximal-order estimate in $\F_q[t]$; see, for instance, the standard irreducible factor estimates in \cite[Ch.~3]{LidlNiederreiter}.

\section{Vaughan decomposition}\label{sec:vaughan-decomp}

Let $\Lambda_{\rm vM}$ denote the von Mangoldt function on $\F_q[t]$:
$\Lambda_{\rm vM}(f)=\deg \pi$ if $f$ is a power of a monic irreducible
polynomial $\pi$, and $\Lambda_{\rm vM}(f)=0$ otherwise. We shall first estimate the weighted sum
\[        \sum_{f\in\Mcal(n)}\Lambda_{\rm vM}(f)\Psi(f),
        \qquad |\Psi(f)|\le1,
\]
where \(\Psi:\Mcal(n)\to\mathbb C\) is an arbitrary complex-valued function
bounded by \(1\).  In the application below we take
\[        \Psi(f)=\psi(Q_A(f)).
\]
At the end we pass from the von Mangoldt weighted sum to irreducible
polynomials. The following form of Vaughan's identity separates the problem into two
quantities.  The first, $\Sigma_1$, is a linear sum with one fixed factor; the
second, $\Sigma_2$, is a bilinear correlation sum with two factors.  We use the
traditional names Type I and Type II for these two estimates.

\begin{lemma}\label{lem:vaughan}
Let $|\Psi(f)|\le1$.  If $u+v<n$, then
\[
\sum_{f\in\Mcal(n)}\Lambda_{\rm vM}(f)\Psi(f)
\ll q^{o(n)}\Sigma_1+q^{n-(u+v)/2+o(n)}\Sigma_2^{1/2},
\]
where
\[
        \Sigma_1=
        \sum_{0\le k\le u+v}\sum_{g\in\Mcal(k)}
        \left|\sum_{h\in\Mcal(n-k)}\Psi(gh)\right|
\]
and
\[
        \Sigma_2=
        \max_{v\le i\le n-u}\max_{g_1\in\Mcal(n-i)}
        \sum_{g_2\in\Mcal(n-i)}
        \left|\sum_{h\in\Mcal(i)}\Psi(hg_1)\overline{\Psi(hg_2)}\right|.
\]
\end{lemma}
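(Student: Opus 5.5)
The plan is to run the standard Vaughan argument in $\F_q[t]$, in the same way as over $\mathbb Z$. First we set up Möbius and von Mangoldt identities, then split each factor by degree, and bound the resulting sums trivially up to $q^{o(n)}$ losses. Write $\Lambda=\Lambda_{\rm vM}$ and let $\mu$ be the polynomial Möbius function. For monic $f$ we have the identities $\sum_{d\mid f}\mu(d)=[f=1]$ and $\sum_{d\mid f}\Lambda(d)=\deg f$. Define the truncations
\[
F(f)=\Lambda(f)\,[\deg f\le v],\qquad M(f)=\mu(f)\,[\deg f\le u].
\]
We will use Vaughan's identity in the form
\[
\Lambda=F+\mu_{\le u}*\deg-\mu_{\le u}*F_{\le v}*1-\mu_{>u}*\Lambda_{>v}*1,
\]
valid on $\Mcal(n)$. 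Since $n>v$, the $F$ term vanishes on $\Mcal(n)$. Expanding $\sum_f\Lambda(f)\Psi(f)$ with this identity gives three sums.

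The first two sums are of Type I. The term $\sum_{g\in\Mcal(k),\,k\le u}\mu(g)\sum_{h\in\Mcal(n-k)}(n-k)\Psi(gh)$ is bounded by $n\sum_{k\le u}\sum_{g}\bigl|\sum_h\Psi(gh)\bigr|$. For the term $\mu_{\le u}*F_{\le v}*1$, group $g=d_1d_2$ with $\deg d_1\le u$ and $\deg d_2\le v$, so that $\deg g\le u+v$. The coefficient of $g$ is $\sum_{d_1d_2=g}\mu(d_1)\Lambda(d_2)$, which is at most $\deg g\le n$ in absolute value. Pulling absolute values inside, this term is $\ll n\Sigma_1$. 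Together, the two Type I pieces give the $q^{o(n)}\Sigma_1$ term.

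The third sum is of Type II. Write it as $\sum_{g,h:\,gh\in\Mcal(n)}a(g)\Lambda(h)[\deg h>v]\Psi(gh)$ with $a(g)=\sum_{d\mid g,\ \deg d>u}\mu(d)$. This forces $\deg g> u$ on the support, and $|a(g)|\le\tau(g)\le q^{o(n)}$ by \eqref{eq:divisor}. Split by $i=\deg h$, so $v<i<n-u$, leaving at most $n$ values. For fixed $i$, apply Cauchy--Schwarz over $g\in\Mcal(n-i)$:
\[
\Bigl|\sum_{g}a(g)\sum_{h\in\Mcal(i)}\Lambda(h)\Psi(gh)\Bigr|^2\le q^{n-i+o(n)}\sum_{g}\Bigl|\sum_h \Lambda(h)\Psi(gh)\Bigr|^2 .
\]
Expanding the square and swapping the roles of the variables is the wrong way round for $\Sigma_2$, whose inner sum runs over $h$ with $g_1,g_2$ outside. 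So instead I would apply Cauchy--Schwarz over $h$, keeping $\Lambda(h)\le n$ as the weight:
\[
\Bigl|\sum_h\Lambda(h)\sum_g a(g)\Psi(gh)\Bigr|^2\le n^2q^{i}\sum_{g_1,g_2}|a(g_1)a(g_2)|\Bigl|\sum_{h\in\Mcal(i)}\Psi(hg_1)\overline{\Psi(hg_2)}\Bigr|.
\]
Bounding $|a(g_1)a(g_2)|\le q^{o(n)}$, the sum over $g_1$ costs at most $q^{n-i}$ times the maximum, giving $q^{i+n-i+o(n)}\Sigma_2=q^{n+o(n)}\Sigma_2$. Taking square roots gives $q^{n/2+o(n)}\Sigma_2^{1/2}$.

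The main obstacle is bookkeeping. The stated factor is $q^{n-(u+v)/2}$, and $n-(u+v)/2\ge n/2$ because $u+v<n$, so the bound $q^{n/2}$ obtained above is stronger than required and is in particular implied. I would double-check that the range in $\Sigma_2$ is $v\le i\le n-u$ and that $\deg g=n-i$ is consistent with this. Summing over the at most $n$ values of $i$ costs only $q^{o(n)}$.
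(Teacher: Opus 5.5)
Your argument is correct and is the same as the paper's: the paper only cites the standard function-field Vaughan identity (M\'erai's Lemma~9) followed by Cauchy--Schwarz over the $\Lambda_{>v}$-variable, and you carry that out explicitly. The one slip is a sign: since $\Lambda_{>v}=\Lambda_{>v}*\mu_{\le u}*1+\Lambda_{>v}*\mu_{>u}*1$, the last term should be $+\mu_{>u}*\Lambda_{>v}*1$, but this is harmless because you only use absolute values. Your Type II factor $q^{n/2+o(n)}$ is actually sharper than the stated $q^{n-(u+v)/2+o(n)}$, and it implies the stated bound because $u+v<n$.
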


\begin{proof}
This is the standard function-field Vaughan identity in the form of
\cite[Lemma~9]{Merai}, applied to the bounded function \(\Psi\).  No special
property of \(\Psi\) is used beyond the bound \(|\Psi|\le1\).  In that identity
the Type I contribution consists of convolutions with a distinguished factor of
degree at most \(u+v\), giving the sum \(\Sigma_1\) above.  All multiplicities
and degree weights are bounded by a fixed power of \(n\), hence by \(q^{o(n)}\).

The Type II contribution is supported on factorizations \(F=GH\) with
\(v\le\deg H\le n-u\).  Applying Cauchy's inequality to the outer factor and
expanding the square gives the correlation sum \(\Sigma_2\), with the usual
factor \(q^{n-(u+v)/2}\).  The remaining polynomial factors in \(n\) are again
absorbed into \(q^{o(n)}\).  This gives the stated bound.
\end{proof}

\section{The bilinear sum \texorpdfstring{$\Sigma_2$}{Sigma2}}\label{sec:typeII}
Recall that, for a polynomial \(g\in\F_q[t]\) of degree \(k\), we write
\[
g^*(t)=t^kg(t^{-1})
\]
for its reciprocal polynomial.  We shall also use the quadratic part
\(Q_{A,2}\) of \(Q_A\), written as in \eqref{quadratic_part}.
\begin{lemma}\label{lem:typeII-rank}
Let $g_1,g_2\in\Mcal(k)$ and suppose $g_1^*g_1\ne g_2^*g_2$.
For $h\in V_i$, the quadratic form
$$h\mapsto Q_{A,2}(hg_1)-Q_{A,2}(hg_2)$$
has rank at least $\max\{0,i-k-m+1\}$.
\end{lemma}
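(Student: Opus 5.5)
The plan is to compute the half-polar form of $h\mapsto Q_{A,2}(hg_1)-Q_{A,2}(hg_2)$ explicitly as a Toeplitz-type bilinear form with symbol
\[
        D(z)=A(z)\bigl(g_1(z)g_1(z^{-1})-g_2(z)g_2(z^{-1})\bigr),
\]
and then to exhibit a triangular square minor of size $i-k-m+1$ from the extreme coefficient of $D$. By \eqref{eq:polar}, the half-polar form is
\[
        B(h,h')=\CT\,D(z)h(z)h'(z^{-1})=\sum_{a,b=0}^{i}d_{b-a}h_ah'_b ,
\]
where $D(z)=\sum_j d_jz^j$. The matrix of $B$ in the monomial basis of $V_i$ is therefore the Toeplitz matrix $(d_{b-a})_{0\le a,b\le i}$. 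Since $A$ and $g(z)g(z^{-1})$ are symmetric, $D$ is symmetric, and its support lies in $[-(k+m),k+m]$.

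The first step is to locate a non-zero extreme coefficient. Since $g_1g_1^*\ne g_2g_2^*$, the symmetric Laurent polynomial $E(z)=g_1(z)g_1(z^{-1})-g_2(z)g_2(z^{-1})$ is non-zero; note $z^kE(z)=g_1(z)g_1^*(z)-g_2(z)g_2^*(z)$ because $g_j$ has degree exactly $k$. Let $e\le k$ be the largest index with $[z^e]E\ne0$. Then $e\ge 0$ by symmetry, and in fact $e\ge1$ because both $g_j$ are monic of degree $k$. Since $A$ has top coefficient $c_m/2\ne0$ at $z^m$ and $\F_q[z,z^{-1}]$ is a domain, the top coefficient of $D$ is $d_{e+m}=\tfrac{c_m}{2}[z^e]E\ne0$, and $d_j=0$ for $j>s:=e+m$. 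Here $s\le k+m$.

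Next, take rows $a=0,\dots,i-s$ and columns $b=s,\dots,i$, assuming $i\ge s$. The resulting $(i-s+1)\times(i-s+1)$ minor has entries $d_{b-a}$. Below the antidiagonal direction, that is when $b-a>s$, these entries vanish, and the diagonal entries with $b-a=s$ equal $d_s\ne0$. So after indexing, the minor is triangular with non-zero diagonal, giving rank at least $i-s+1\ge i-k-m+1$. If $i<s$ the bound $0$ is trivial. The factor $2$ between polar and half-polar forms is harmless since $q$ is odd.

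The only step needing care is the identification of the polar form with the Toeplitz matrix. In particular, one must check the orientation convention $\CT\, z^{a}z^{-b}$ so that the index $b-a$ is correct and the triangular structure is right. One must also confirm that the top-degree argument uses the actual top coefficient of $E$ rather than the coefficient at $z^k$, which may vanish. Using the true top index $e$ only improves the bound, so the stated estimate follows.
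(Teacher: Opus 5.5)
Your proposal is correct and is essentially the paper's own proof: the half-polar matrix is the Toeplitz matrix $(d_{b-a})$ with symbol $A(z)\bigl(g_1(z)g_1(z^{-1})-g_2(z)g_2(z^{-1})\bigr)$, this symbol is non-zero because $\F_q[z,z^{-1}]$ is a domain, and its extreme non-zero diagonal gives a triangular minor of size at least $i-k-m+1$. Two side remarks are inaccurate but unused. First, $e\ge1$ can fail: for $g_1=z^2-1$ and $g_2=z^2+z-1$ one gets $E=-1$. Second, for $m=0$ the top coefficient of $A$ is $c_0$, not $c_0/2$. Since your argument only needs $e\le k$ and a non-zero top coefficient, the bound is unaffected.
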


\begin{proof}
The quadratic part \(Q_{A,2}\) is represented by the Laurent symbol \(A(z)\).
After the change of variables \(f=hg\), the corresponding symbol in the
\(h\)-variables is multiplied by \(g(z)g(z^{-1})\).  Hence the polar matrix
of
\[
        h\longmapsto Q_{A,2}(hg_1)-Q_{A,2}(hg_2)
\]
is the Toeplitz matrix with Laurent symbol
\[
        H(z)=A(z)\bigl(g_1(z)g_1(z^{-1})-g_2(z)g_2(z^{-1})\bigr).
\]
Since
\[
        g(z)g(z^{-1})=z^{-k}g^*(z)g(z),
\]
the assumption \(g_1^*g_1\ne g_2^*g_2\) implies
\[
        g_1(z)g_1(z^{-1})\ne g_2(z)g_2(z^{-1}).
\]
As \(\F_q[z,z^{-1}]\) is an integral domain and \(A\ne0\), it follows that
\(H(z)\ne0\).

Moreover, \(A\) has support contained in \([-m,m]\), while
\(g_j(z)g_j(z^{-1})\) has support contained in \([-k,k]\). Thus
\[
\text{supp}~H\subseteq [-k-m,k+m].
\]
Choose an exponent \(e\in\text{supp}~H\) which is maximal; the argument with a
minimal exponent is identical.  Write
\[
        H(z)=\sum_\nu H_\nu z^\nu.
\]
Then \(H_e\ne0\), and \(H_\nu=0\) for every \(\nu>e\).

With respect to the basis \(1,t,\ldots,t^i\) of \(V_i\), the polar matrix
\(M_H\) has entries
\[
        (M_H)_{a,b}=H_{b-a},\qquad 0\le a,b\le i.
\]
Consequently the diagonal \(b-a=e\) consists of entries equal to \(H_e\),
whereas all diagonals with \(b-a>e\) are zero.

The diagonal \(b-a=e\) contains
\[
        \#\{a:0\le a\le i,\ 0\le a+e\le i\}
\]
entries.  Since \(|e|\le k+m\), this number is at least
$i-k-m+1$ whenever the latter is positive.  Put
\[
        R=\max\{0,i-k-m+1\}.
\]
If \(R=0\), there is nothing to prove.  Otherwise choose \(R\) pairs
\((a_j,b_j)\), \(1\le j\le R\), on the diagonal \(b-a=e\), ordered so that
\(a_1<\cdots<a_R\) and \(b_j=a_j+e\).  Consider the submatrix of \(M_H\)
formed by the rows \(a_1,\ldots,a_R\) and the columns \(b_1,\ldots,b_R\).
Since entries with \(b-a>e\) vanish, this submatrix is triangular after the
chosen ordering, and its diagonal entries are all \(H_e\).  Its determinant is
therefore \(H_e^R\ne0\).

Thus \(M_H\) has a nonzero \(R\times R\) minor, and hence
\[
        \operatorname{rank} M_H\ge R
        =
        \max\{0,i-k-m+1\}.
\]
This proves Lemma \ref{lem:typeII-rank}.
\end{proof}

\begin{lemma}\label{lem:reciprocal}
For fixed $b\in\Mcal(k)$, the number of $a\in\Mcal(k)$ satisfying
\[
        a^*a=b^*b
\]
is at most $\tau(b)$, where \(\tau(b)\) denotes the number of monic divisors of \(b\).
\end{lemma}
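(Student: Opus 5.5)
The plan is to show that any solution $a$ is completely determined by the monic divisor $d=\gcd(a,b)$ of $b$. That gives an injection from the solution set into the set of monic divisors of $b$, hence at most $\tau(b)$ solutions. Two elementary facts about reciprocals will be used throughout. First, $(uv)^*=u^*v^*$ for nonzero $u,v$, since degrees add. Second, $u^*\ne0$ and $\deg u^*\le\deg u$, with equality exactly when $u(0)\ne0$.

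Fix a monic $a$ of degree $k$ with $a^*a=b^*b$, and put $d=\gcd(a,b)$, which is monic. Write $a=da_1$ and $b=db_1$, where $a_1,b_1$ are monic, coprime, and both of degree $k-\deg d$. By multiplicativity of the reciprocal,
\[
        d^*a_1^*\,d\,a_1=d^*b_1^*\,d\,b_1 .
\]
Since $\F_q[t]$ is an integral domain and $d\,d^*\ne0$, cancellation gives $a_1^*a_1=b_1^*b_1$.

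From this identity, $a_1$ divides $b_1^*b_1$. Because $\gcd(a_1,b_1)=1$, it follows that $a_1\mid b_1^*$. Now compare degrees: $b_1^*\ne0$ and $\deg b_1^*\le\deg b_1=\deg a_1$. A nonzero multiple of $a_1$ of degree at most $\deg a_1$ must be a constant multiple of $a_1$, so $b_1^*=\lambda a_1$ for some $\lambda\in\F_q^\times$. Since $a_1$ is monic, $\lambda$ is the leading coefficient of $b_1^*$, and therefore
\[
        a_1=\lambda^{-1}b_1^*
\]
is determined by $b_1=b/d$ alone. Consequently $a=d\,\lambda^{-1}(b/d)^*$ is a function of $d$ only.

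The map $a\mapsto\gcd(a,b)$ is therefore injective from the solution set into the monic divisors of $b$, which yields the bound $\tau(b)$. The only delicate point is the degree comparison. The reciprocal is taken with respect to the actual degree, so $b_1^*$ may have smaller degree than $b_1$ when $t\mid b_1$. This only helps, since it forces equality of degrees, so $b_1(0)\ne0$ in any case. No step needs factorization into irreducibles beyond the coprimality argument.
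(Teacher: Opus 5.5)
Your proof is correct and follows the paper's argument essentially step for step. Both show that $a\mapsto\gcd(a,b)$ is injective by cancelling $d^*d$, using coprimality to get $a_1\mid b_1^*$, and then using the degree comparison to force $a_1$ to be the monic normalization of $b_1^*$. The only cosmetic difference is that you absorb the case $b_1(0)=0$ into the inequality $\deg b_1^*\le\deg a_1$, whereas the paper rules it out in a separate sentence.
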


\begin{proof}
We prove that the map
\[
        a\longmapsto d=\gcd(a,b)
\]
from the set of solutions \(a\in\Mcal(k)\) of \(a^*a=b^*b\) to the set of
monic divisors of \(b\) is injective.  Since the image lies among the monic
divisors of \(b\), this will give the desired bound \(\tau(b)\).

Suppose that \(a\) is a solution, and put $d=\gcd(a,b)$. Write
$a=da_1,\ b=db_1$ with \(a_1,b_1\) monic and \(\gcd(a_1,b_1)=1\). Since reversal is
multiplicative, the identity \(a^*a=b^*b\) gives $d^*d\,a_1^*a_1=d^*d\,b_1^*b_1$, and hence
$a_1^*a_1=b_1^*b_1$.
Thus \(a_1\mid b_1^*b_1\). Since \((a_1,b_1)=1\), we have
$ a_1\mid b_1^*$.

Since
\(\deg a=\deg b=k\) and \(\deg d\) is fixed, we have
$\deg a_1=\deg b_1$.
If \(b_1(0)=0\), then \(\deg b_1^*<\deg b_1\), so no monic polynomial
\(a_1\) of degree \(\deg b_1\) can divide \(b_1^*\). Hence \(b_1(0)\ne0\), and then \(\deg b_1^*=\deg b_1\). It follows that \(a_1\) is a
monic divisor of \(b_1^*\) of the same degree as \(b_1^*\). Therefore
\(a_1\) must be the monic normalization of \(b_1^*\). Since the leading
coefficient of \(b_1^*\) is \(b_1(0)\), this gives
$a_1=b_1(0)^{-1}b_1^*$.
Thus, 
$a=\gcd(a,b)b_1(0)^{-1}b_1^*$. Consequently two solutions \(a,a'\) with \(\gcd(a,b)=\gcd(a',b)\)
must be equal. Hence the map \(a\mapsto \gcd(a,b)\) is injective, as claimed.
\end{proof}

\begin{proposition}\label{prop:typeII}
With $\Sigma_2$ as in Lemma \ref{lem:vaughan} and $\Psi(f)=\psi(Q_A(f))$, assume $v>n/2+O_A(1)$.  Then
\[
        \Sigma_2\ll_A q^{n-u+o(n)}+q^{3n/2-v+o(n)}.
\]
\end{proposition}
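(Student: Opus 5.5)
Fix $v\le i\le n-u$ and $g_1\in\Mcal(k)$ with $k=n-i$, and split the sum over $g_2\in\Mcal(k)$ according to whether $g_2^*g_2=g_1^*g_1$.

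\emph{Diagonal terms.} If $g_2^*g_2=g_1^*g_1$, bound the inner sum trivially by $\#\Mcal(i)=q^i$. By Lemma~\ref{lem:reciprocal} (with $b=g_1$) there are at most $\tau(g_1)\le q^{o(n)}$ such $g_2$, using \eqref{eq:divisor}. These terms contribute at most $q^{i+o(n)}\le q^{n-u+o(n)}$, since $i\le n-u$.

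\emph{Off-diagonal terms.} For the remaining $g_2$, write
\[
\Psi(hg_1)\overline{\Psi(hg_2)}=\psi\bigl(Q_{A,2}(hg_1)-Q_{A,2}(hg_2)+\ell(hg_1)-\ell(hg_2)\bigr).
\]
The difference of linear parts is a linear form in $h$, because $h\mapsto hg_j$ is linear. Lemma~\ref{lem:typeII-rank} shows that the quadratic part, viewed on $V_i$, has polar rank at least $i-k-m+1$. By Lemma~\ref{lem:monic-slice}, restricting to the affine hyperplane $\Mcal(i)$ gives
\[
\Bigl|\sum_{h\in\Mcal(i)}\cdots\Bigr|\ll q^{i-(i-k-m+1)/2+O(1)}=q^{(i+k)/2+O_A(1)}=q^{n/2+O_A(1)}.
\]
This bound is nontrivial only when $i>k+m$, that is $i>n/2+O_A(1)$. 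The hypothesis $v>n/2+O_A(1)$ together with $i\ge v$ guarantees this throughout the range. Summing over at most $q^k=q^{n-i}\le q^{n-v}$ choices of $g_2$, the off-diagonal terms contribute $\ll q^{3n/2-v+O_A(1)}$.

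Adding the two parts and taking the maxima over $i$ and $g_1$ gives the stated bound for $\Sigma_2$. The only point needing care is that Lemma~\ref{lem:typeII-rank} is stated on $V_i$ while the sum runs over monic $h$. Lemma~\ref{lem:monic-slice} handles this at a loss of $O(1)$ in the exponent. The linear terms are harmless because Lemma~\ref{lem:quad} allows an arbitrary linear phase.
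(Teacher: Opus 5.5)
Your proposal is correct and follows the paper's proof essentially step for step. You use the same split into pairs with $g_1^*g_1=g_2^*g_2$, handled by Lemma~\ref{lem:reciprocal}, the divisor bound and the trivial bound $q^i\le q^{n-u}$, and the remaining pairs, handled by Lemmas~\ref{lem:typeII-rank} and~\ref{lem:monic-slice} with $q^{n/2+O_A(1)}$ per term and at most $q^{n-i}\le q^{n-v}$ terms, while your remarks on the linear phase and the monic slice only make explicit what the paper uses implicitly.
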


\begin{proof}
Fix $g_1\in\Mcal(n-i)$ and write $k=n-i$.  If $g_1^*g_1\ne g_2^*g_2$, Lemmas \ref{lem:typeII-rank} and \ref{lem:monic-slice} give
\[
        \left|\sum_{h\in\Mcal(i)}\psi(Q_A(hg_1)-Q_A(hg_2))\right|
        \ll_A q^{i-(i-k-m)/2+O(1)}.
\]
Since $i\ge v>n/2+O_A(1)$, the last bound is $\ll_A q^{n/2+o(n)}$.  Summing over the non-exceptional $g_2$ gives
\[
        q^{n-i}\,q^{n/2+o(n)}.
\]
For the exceptional $g_2$, Lemma \ref{lem:reciprocal} and \eqref{eq:divisor} give only $q^{o(n)}$ choices, and the inner sum is trivially $O(q^i)$.  Hence the exceptional contribution is $q^{i+o(n)}\le q^{n-u+o(n)}$.  Taking the maximum over $v\le i\le n-u$ gives
\[
        \Sigma_2\ll_A q^{n-u+o(n)}+q^{3n/2-v+o(n)}.
\]
\end{proof}

\section{Rank defects for the linear sums}\label{sec:typeI}

For $0\ne g\in\F_q[z]$ and $N\ge0$ let
\[
        B_{g,N}(h,w)=B_A(gh,gw)\qquad (h,w\in V_N)
\]
be the polar form of the quadratic part of $h\mapsto Q_A(gh)$ on $V_N$.  We
write
\begin{equation}\label{eq:Delta-def}
        \Delta_A(g;N)=\dim\rad(B_{g,N})=N+1-\rank B_{g,N}.
\end{equation}
The linear part of $Q_A$ does not affect this definition.

The only technical nuisance in the linear sums is a possible factor $z^r$ in
$g$.  This factor disappears from the Laurent symbol.  Write
\[
        r(g)=\ord_z g,\qquad g_0(z)=z^{-r(g)}g(z),\qquad k_0=\deg g_0 .
\]
Thus $g_0(0)\ne0$.  Put
\[
        d_0=k_0+m,\qquad F_{g_0}(z)=P(z)g_0(z)g_0^*(z).
\]
Then $F_{g_0}(0)\ne0$ and $\deg F_{g_0}=2d_0$.  Moreover
\[
        g(z)g(z^{-1})=g_0(z)g_0(z^{-1}),
\]
and hence
\begin{equation}\label{eq:Delta-zero-removal}
        \Delta_A(g;N)=\Delta_A(g_0;N).
\end{equation}

\begin{lemma}\label{lem:gap}
Let $g=z^r g_0$ with $g_0(0)\ne0$ and let $d_0=\deg g_0+m$.  For
$h\in V_N$, $h$ lies in the radical of $B_{g,N}$ if and only if
\[
        [z^j]F_{g_0}(z)h(z)=0\qquad(d_0\le j\le d_0+N).
\]
\end{lemma}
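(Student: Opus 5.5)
We reduce first to $g=g_0$ and then unfold the definition of the radical in coefficients. Since $g(z)g(z^{-1})=g_0(z)g_0(z^{-1})$, the form $B_{g,N}$ is literally the same as $B_{g_0,N}$. Indeed, for $h,w\in V_N$,
\[
B_A(gh,gw)=\CT\,A(z)g(z)g(z^{-1})h(z)w(z^{-1})=\CT\,A(z)g_0(z)g_0(z^{-1})h(z)w(z^{-1}).
\]
It therefore suffices to treat $g_0$, and we write $k_0=\deg g_0$.

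Next we rewrite the symbol with the polynomial $F_{g_0}$. We have $g_0(z^{-1})=z^{-k_0}g_0^*(z)$ and $A(z)=z^{-m}P(z)$, so
\[
A(z)g_0(z)g_0(z^{-1})=z^{-d_0}F_{g_0}(z).
\]
Hence
\[
B_{g,N}(h,w)=\CT\, z^{-d_0}F_{g_0}(z)h(z)w(z^{-1}).
\]
Expand $w=\sum_{b=0}^N w_bt^b$. The constant term of $z^{-d_0}F_{g_0}(z)h(z)z^{-b}$ equals $[z^{d_0+b}]\bigl(F_{g_0}(z)h(z)\bigr)$. By bilinearity this gives
\[
B_{g,N}(h,w)=\sum_{b=0}^N w_b\,[z^{d_0+b}]F_{g_0}(z)h(z).
\]

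The radical is taken in the first slot. Because of the symmetry $A(z)=A(z^{-1})$ the form is symmetric, so the choice of slot does not matter. Now $h$ lies in $\rad(B_{g,N})$ exactly when $B_{g,N}(h,w)=0$ for all $w\in V_N$. Testing against the basis vectors $w=t^b$ for $0\le b\le N$, this happens exactly when $[z^{d_0+b}]F_{g_0}h=0$ for every such $b$, that is, when $[z^j]F_{g_0}h=0$ for all $j$ with $d_0\le j\le d_0+N$. This is the claim.

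No step is a real obstacle. The only point needing care is the bookkeeping of the shift: one must check that $z^{-m}$ from $A$ and $z^{-k_0}$ from $g_0(z^{-1})$ combine into exactly $z^{-d_0}$. One must also check that the factor $z^r$ genuinely cancels in $g(z)g(z^{-1})$, so that $d_0$ uses $\deg g_0$ and not $\deg g$.
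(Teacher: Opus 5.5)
Your proposal is correct and follows the paper's proof essentially verbatim. Like the paper, you cancel $z^r$ in $g(z)g(z^{-1})$ and rewrite $A(z)g_0(z)g_0(z^{-1})=z^{-d_0}F_{g_0}(z)$. You then expand $w$ in the monomial basis and read off the coefficients $[z^{d_0+b}]F_{g_0}h$ for $0\le b\le N$. Your extra remark that $B_{g,N}$ is symmetric, so the slot in which the radical is taken does not matter, is a harmless check that the paper leaves implicit.
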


\begin{proof}
For $w\in V_N$ we have
\begin{align*}
        B_A(gh,gw)
        &=\CT\,A(z)g(z)h(z)g(z^{-1})w(z^{-1})\\
        &=\CT\,A(z)g_0(z)h(z)g_0(z^{-1})w(z^{-1}),
\end{align*}
where the power $z^r$ has cancelled against $z^{-r}$.  Since
$A(z)=z^{-m}P(z)$ and $g_0(z^{-1})=z^{-k_0}g_0^*(z)$, this becomes
\[
        \CT\,z^{-d_0}F_{g_0}(z)h(z)w(z^{-1}).
\]
Write $w(z)=\sum_{a=0}^N w_a z^a$.  Then
$w(z^{-1})=\sum_{a=0}^N w_a z^{-a}$, and the last constant term is
\[
        \sum_{a=0}^N w_a\,[z^{d_0+a}]F_{g_0}(z)h(z).
\]
This expression is zero for every choice of $w_0,\ldots,w_N$ if and only if
all the displayed coefficients vanish, namely
\[
        [z^j]F_{g_0}(z)h(z)=0
        \qquad(d_0\le j\le d_0+N).
\]
This is precisely the stated radical condition.
\end{proof}

\section{A central averaged rank-defect estimate}\label{sec:global}

In the central range it is useful to forget the special factorization
$Pgg^*$ and to average over the whole linear space of reciprocal symbols.  This
gives a weaker symbol estimate than a full layer estimate would give, but it is
sufficient for the Vaughan decomposition after the Type I range has been split.

Let
\[
        \Mcal^\times(k)=\{g\in\Mcal(k):g(0)\ne0\}.
\]
For $d\ge0$ let $\Rcal_d$ be the $\F_q$-vector space of reciprocal polynomials
with centre $d$:
\[
        \Rcal_d=\{T(z)=\sum_{s=0}^{2d}T_s z^s:T_s=T_{2d-s}\}.
\]
Thus $\dim \Rcal_d=d+1$.  For $T\in\Rcal_d$ and $N\ge0$ define
\[
        \Delta(T;N)=\dim\{H\in V_N:[z^e]T(z)H(z)=0\quad(d\le e\le d+N)\}.
\]
Because $P^*=P$, if $g\in\Mcal^\times(\kappa)$, $d=\kappa+m$, and
$T=Pgg^*$, then $T\in\Rcal_d$.  Lemma~\ref{lem:gap} gives
\begin{equation}\label{eq:Delta-symbol-equality}
        \Delta_A(g;N)=\Delta(T;N).
\end{equation}
Thus a half-moment bound for \(q^{\Delta(T;N)}\) over \(\Rcal_d\) will imply
the estimate
\[
        \sum_{g\in\Mcal^\times(\kappa)}q^{\Delta_A(g;N)/2}
        \ll_A q^{\kappa+3N/8+o(n)}.
\]
Combined with the quadratic-sum bound on the monic slice, this gives the
central Type I exponent \(q^{\kappa+7N/8+o(n)}\).

We need only a one-sided endpoint-rank estimate for multiplication by a fixed
polynomial on the reciprocal-symbol space.

\begin{lemma}\label{lem:reciprocal-symbol-rank}
Let $C\ge0$ be fixed.  Let $H\in V_N$ be non-zero and write
\[
        H=z^\nu(h_0+h_1z+\cdots+h_Rz^R),\qquad h_0h_R\ne0.
\]
Assume $R\le 4d/3+C$.  Consider the linear map
\[
        L_H:\Rcal_d\longrightarrow \F_q^{N+1},\qquad
        T\longmapsto ([z^{d+j}]T(z)H(z))_{0\le j\le N}.
\]
Then
\[
        \rank L_H\ge \frac R4-O_C(1).
\]
\end{lemma}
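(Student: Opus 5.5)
The plan is to exhibit an explicit triangular minor of $L_H$, as in the proof of Lemma~\ref{lem:typeII-rank}, using a basis of $\Rcal_d$ adapted to the centre $d$. As a basis of $\Rcal_d$ we take
\[
        b_a(z)=z^{d-a}+z^{d+a}\quad(1\le a\le d),\qquad b_0(z)=z^d .
\]
Write $h(z)=h_0+\cdots+h_Rz^R$, so that $H=z^\nu h$. Then $b_aH$ is the sum of two blocks: a lower block $z^{\nu+d-a}h$, supported in $[\nu+d-a,\nu+d-a+R]$, and an upper block $z^{\nu+d+a}h$, supported in $[\nu+d+a,\nu+d+a+R]$. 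The window read off by $L_H$ is $W=[d,d+N]$.

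The key observation concerns indices $a>R/2$. For these, the gap between the two blocks is $2a>R$, so they do not overlap. For each such $a$ we test against the exponent
\[
        f_a=\nu+d-a+R,
\]
which is the top of the lower block and carries the coefficient $h_R\ne0$. We then check which $b_{a'}$ with $a'>R/2$ can have a non-zero coefficient at $z^{f_a}$.
\begin{itemize}
\item The lower block of $b_{a'}$ meets $f_a$ only if $a-R\le a'\le a$.
\item The upper block of $b_{a'}$ meets $f_a$ only if $\nu+d+a'\le \nu+d-a+R$, that is $a+a'\le R$. This is impossible when $a,a'>R/2$.
\end{itemize}
So the square matrix with rows $f_a$ and columns $b_{a'}$, where $a,a'$ range over the same set of indices $>R/2$, is triangular with every diagonal entry equal to $h_R$. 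Hence its rank equals the number of admissible indices $a$. The case $a=d$ needs no separate treatment, since then $b_d=z^0+z^{2d}$ and the same block description applies.

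It remains to count the admissible $a$, namely those with $R/2<a\le d$ and $f_a\in W$. The condition $f_a\le d+N$ reads $a\ge \nu+R-N$. This holds automatically, because $H\in V_N$ forces $\nu+R\le N$. The condition $f_a\ge d$ reads $a\le \nu+R$, and this holds whenever $a\le R$, since $\nu\ge0$. Therefore
\[
        \rank L_H\ \ge\ \#\{a\in\mathbb Z:\ R/2<a\le \min(d,R)\}\ \ge\ \min(d,R)-R/2-1.
\]
The hypothesis $R\le 4d/3+C$ gives $d\ge 3(R-C)/4$. In either case the right-hand side is then at least $R/4-O_C(1)$, which is the claim. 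This step is exactly where the hypothesis on $R$ is used.

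The main obstacle is choosing the test exponents so that the matrix is genuinely triangular despite the two-block (reciprocal) structure of each basis vector. If one uses the top exponent $\nu+d+a+R$ of the whole product, it may fall outside $W$. If one uses the bottom exponent, it needs $a\le\nu$, and $\nu$ may be zero. Restricting to $a>R/2$ separates the blocks, and taking the top of the lower block keeps the test exponent inside $W$ for free. If some boundary case resists, for instance $R$ very small, the statement is trivially true anyway because of the $O_C(1)$ term. The symmetric choice, using the bottom $\nu+d+a$ of the upper block with coefficient $h_0$, is available as a fallback.
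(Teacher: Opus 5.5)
Your proposal is correct and is essentially the paper's proof. The paper uses the same basis $x_a$ of $\Rcal_d$ and the same minor: rows $C_j$ with $j=R-a$, which are your exponents $f_a$ after stripping $z^\nu$, and columns $x_a$ for $R/2<a\le\min(d,R)$. The pivot is the same $h_R$, and the count is the same, $\min(d,R)-R/2-O(1)\ge R/4-O_C(1)$. The only differences are cosmetic: the paper first reduces to $\nu=0$ and checks triangularity through the formula $C_j=\sum_s h_s x_{|j-s|}$, whereas you keep $\nu$ and argue with the supports of the two blocks.
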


\begin{proof}
First remove the initial power of $z$.  If
$H=z^\nu H_0$ with $\deg H_0=R$, then
\[
        [z^{d+\nu+j}]T(z)H(z)=[z^{d+j}]T(z)H_0(z).
\]
Because $\nu+R=\deg H\le N$, all rows with shifted indices
$\nu,\nu+1,\ldots,\nu+R$ are among the rows of $L_H$.  It is therefore enough to
prove the claimed lower bound for $H_0$.  Thus we may assume from now on that
\[
        H=h_0+h_1z+\cdots+h_Rz^R,
        \qquad h_0h_R\ne0 .
\]

Use the coordinates of $\Rcal_d$ given by
\[
        T(z)=x_0z^d+\sum_{a=1}^{d}x_a(z^{d+a}+z^{d-a}).
\]
For $0\le j\le R$, define the row functional
\[
        C_j=[z^{d+j}]T(z)H(z).
\]
A direct coefficient calculation gives
\[
        C_j=\sum_{s=0}^{R}h_sx_{|j-s|},
        \qquad x_a:=0\quad(a>d).
\]
We now choose rows for which the endpoint coefficient $h_R$ produces a pivot.
Let
\[
        J=\{j\in\mathbb Z:\max(0,R-d)\le j\le \lfloor(R-1)/2\rfloor\}.
\]
For $j\in J$ we have $0\le R-j\le d$, so the variable $x_{R-j}$ is one of the
coordinates of $\Rcal_d$.  In the row $C_j$ it appears from the term $s=R$ with
coefficient $h_R\ne0$.  For every other $s<R$, the index of the corresponding
variable is strictly smaller:
\[
        |j-s|\le \max(j,R-1-j)<R-j,
\]
because $j<R/2$.  Thus, if the columns are ordered by decreasing index, the
submatrix with rows $C_j$ for $j\in J$ and columns $x_{R-j}$ for $j\in J$ is
triangular with diagonal entries all equal to $h_R$.  It has full rank $|J|$.
Consequently
\[
        \rank L_H\ge |J|.
\]
It remains only to estimate the size of $J$.  If $R\le d$, then
\[
        |J|\ge \lfloor(R-1)/2\rfloor+1\ge R/2-O(1).
\]
If $d<R\le 4d/3+C$, then
\[
        |J|\ge \lfloor(R-1)/2\rfloor-(R-d)+1
             = d-R/2-O(1)
             \ge R/4-O_C(1).
\]
The case $R=0$ gives a vacuous lower bound.  This proves the lemma.
\end{proof}

\begin{proposition}\label{prop:reciprocal-incidence}
Uniformly for $d,N$ with $N\le (4/3)d+O_A(1)$,
\[
        \sum_{T\in\Rcal_d} q^{\Delta(T;N)}
        \ll_A q^{d+3N/4+o(n)}.
\]
Consequently,
\[
        \sum_{T\in\Rcal_d} q^{\Delta(T;N)/2}
        \ll_A q^{d+3N/8+o(n)}.
\]
\end{proposition}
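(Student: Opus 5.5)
Write $K(T)=\{H\in V_N:L_H(T)=0\}$, so that $\Delta(T;N)=\dim K(T)$ and $q^{\Delta(T;N)}=\#K(T)$. The plan is to count the bilinear incidence
\[
        \mathcal I=\#\{(T,H)\in\Rcal_d\times V_N: [z^{d+j}]T(z)H(z)=0\ (0\le j\le N)\}
        =\sum_{T\in\Rcal_d}q^{\Delta(T;N)}
\]
the other way round, by summing first over $H$. The condition is linear in $T$ for each fixed $H$. Hence for each $H$ the number of $T$ is $q^{\dim\Rcal_d-\rank L_H}=q^{d+1-\rank L_H}$. The pair $H=0$ contributes $q^{d+1}$, and this is acceptable.

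For $H\ne0$ I use the parametrization of Lemma \ref{lem:reciprocal-symbol-rank}: $H=z^\nu H_0$ with $\deg H_0=R$ and $\nu+R\le N$. The number of $H$ with a given $R$ is at most $(N+1)q^{R+1}$, since we choose $\nu$ and then the $R+1$ coefficients of $H_0$. We have $R\le N\le 4d/3+O_A(1)$, so the lemma applies with a constant $C$ depending only on $A$, and gives $\rank L_H\ge R/4-O_A(1)$. Therefore
\[
        \mathcal I\le q^{d+1}+\sum_{0\le R\le N}(N+1)q^{R+1}\,q^{d+1-R/4+O_A(1)}
        \ll_A q^{d+o(n)}\sum_{R\le N}q^{3R/4}\ll_A q^{d+3N/4+o(n)},
\]
where the factors $N+1$ and the number of values of $R$ are absorbed into $q^{o(n)}$. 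This is the first assertion.

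The second assertion follows from Cauchy--Schwarz:
\[
        \sum_{T\in\Rcal_d}q^{\Delta(T;N)/2}\le(\#\Rcal_d)^{1/2}\Bigl(\sum_{T}q^{\Delta(T;N)}\Bigr)^{1/2}
        \ll_A q^{(d+1)/2}\,q^{(d+3N/4)/2+o(n)}=q^{d+3N/8+o(n)}.
\]

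The only delicate point is applying Lemma \ref{lem:reciprocal-symbol-rank} uniformly. Its hypothesis $R\le 4d/3+C$ must hold for every nonzero $H\in V_N$, and this is exactly where the assumption $N\le(4/3)d+O_A(1)$ enters. The rows of $L_H$ also have to cover the shifted indices used in the lemma, and the lemma's proof already takes care of this. Everything else is bookkeeping.
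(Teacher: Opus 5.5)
Your proposal is correct and follows the paper's proof essentially step for step: you double count the pairs $(T,H)$ by summing over $H$ first, treat $H=0$ separately, and apply Lemma~\ref{lem:reciprocal-symbol-rank}, whose hypothesis holds because $R\le N\le (4/3)d+O_A(1)$, to get kernel size $q^{d+1-R/4+O_A(1)}$; you then absorb the polynomial multiplicities into $q^{o(n)}$ and finish with Cauchy--Schwarz. The only cosmetic difference is that you group $H$ by $(\nu,R)$ and sum a geometric series in $R$, whereas the paper groups by $(\nu,\lambda)$ and bounds each of the $O(N^2)$ classes by $q^{d+3N/4+O_A(1)}$.
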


\begin{proof}
For a fixed $T\in\Rcal_d$, the quantity $q^{\Delta(T;N)}$ is exactly the number
of polynomials $H\in V_N$ satisfying
\[
        [z^e]T(z)H(z)=0\qquad(d\le e\le d+N).
\]
Therefore
\[
        \sum_{T\in\Rcal_d}q^{\Delta(T;N)}
\]
counts the pairs $(T,H)\in\Rcal_d\times V_N$ satisfying these middle-coefficient
equations.  We estimate this incidence by summing first over $H$.

If $H=0$, every $T\in\Rcal_d$ is allowed.  This gives $\#\Rcal_d=q^{d+1}$, which is $O(q^{d+3N/4+O(1)})$ and is admissible.

Assume now that $H\ne0$.  Write $\nu=\ord_zH$, $\lambda=\deg H$, and
$R=\lambda-\nu$.  For fixed $\nu$ and $\lambda$, the polynomial $H$ has the form
\[
        H=z^\nu(h_0+h_1z+\cdots+h_Rz^R),
        \qquad h_0h_R\ne0,
\]
so the number of such $H$ is at most $q^{R+1}$.  There are only $O(N^2)$ possible
pairs $(\nu,\lambda)$.

For this fixed non-zero $H$, the equations in $T$ are precisely the linear
system $L_H(T)=0$.  Since $R\le N\le(4/3)d+O_A(1)$,
Lemma~\ref{lem:reciprocal-symbol-rank} gives
\[
        \rank L_H\ge R/4-O_A(1).
\]
As $\dim\Rcal_d=d+1$, the kernel of $L_H$ has size at most
\[
        q^{d+1-R/4+O_A(1)}.
\]
Thus the total number of incidences with this fixed pair $(\nu,\lambda)$ is
\[
        \ll_A q^{R+1}\,q^{d+1-R/4+O_A(1)}
        \ll_A q^{d+3R/4+O_A(1)}
        \le q^{d+3N/4+O_A(1)}.
\]
After summing over the $O(N^2)$ choices of $(\nu,\lambda)$, the polynomial
factor is absorbed into $q^{o(n)}$.  This proves
\[
        \sum_{T\in\Rcal_d}q^{\Delta(T;N)}
        \ll_A q^{d+3N/4+o(n)}.
\]

The half-moment follows from Cauchy's inequality and
$\#\Rcal_d=q^{d+1}$:
\[
\sum_{T\in\Rcal_d}q^{\Delta(T;N)/2}
\le \#\Rcal_d^{1/2}
     \left(\sum_{T\in\Rcal_d}q^{\Delta(T;N)}\right)^{1/2}
\ll_A q^{d+3N/8+o(n)}.
\]
\end{proof}

\begin{proposition}\label{prop:central-half}
Uniformly for all non-negative integers $\kappa,N$ in the ranges $\kappa,N\le n+O_A(1)$ and satisfying
\[
        N\le \frac43\kappa+O_A(1),
\]
one has
\[
        \sum_{g\in\Mcal^\times(\kappa)} q^{\Delta_A(g;N)/2}
        \ll_A q^{\kappa+3N/8+o(n)} .
\]
\end{proposition}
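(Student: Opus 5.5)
The plan is to transport the sum from $g$ to the reciprocal symbol $T=Pgg^*$ and then invoke Proposition~\ref{prop:reciprocal-incidence}. Put $d=\kappa+m$. For $g\in\Mcal^\times(\kappa)$ we have $g(0)\ne0$, so $\deg g^*=\kappa$. Since $P^*=P$ and $\deg P=2m$, the symbol $T_g:=Pgg^*$ lies in $\Rcal_d$. By \eqref{eq:Delta-symbol-equality}, which comes from Lemma~\ref{lem:gap} with $r(g)=0$, we have $\Delta_A(g;N)=\Delta(T_g;N)$. Hence
\[
        \sum_{g\in\Mcal^\times(\kappa)} q^{\Delta_A(g;N)/2}
        =\sum_{T\in\Rcal_d} \#\{g\in\Mcal^\times(\kappa):T_g=T\}\,q^{\Delta(T;N)/2}.
\]

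Next I bound the fibre multiplicity uniformly. Suppose $T_g=T_{g'}$. Since $\F_q[z]$ is an integral domain and $P\ne0$, this gives $gg^*=g'g'^*$. Lemma~\ref{lem:reciprocal} then says that, for fixed $g$, the number of $g'\in\Mcal(\kappa)$ with $g'^*g'=g^*g$ is at most $\tau(g)$. By \eqref{eq:divisor} this is at most $q^{o(\kappa)}\le q^{o(n)}$, because $\kappa\le n+O_A(1)$. So every nonempty fibre has size $q^{o(n)}$. Bounding the fibre sizes this way and extending the sum over $T$ to all of $\Rcal_d$ (every term is nonnegative) gives
\[
        \sum_{g\in\Mcal^\times(\kappa)} q^{\Delta_A(g;N)/2}
        \le q^{o(n)}\sum_{T\in\Rcal_d} q^{\Delta(T;N)/2}.
\]

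Finally I check the hypothesis of Proposition~\ref{prop:reciprocal-incidence}. We have $N\le\frac43\kappa+O_A(1)\le\frac43 d+O_A(1)$, since $d\ge\kappa$. Also $d\le n+O_A(1)$, so the $q^{o(n)}$ convention applies. The proposition yields $\sum_{T\in\Rcal_d}q^{\Delta(T;N)/2}\ll_A q^{d+3N/8+o(n)}=q^{\kappa+m+3N/8+o(n)}$. The factor $q^m$ depends only on $A$ and is absorbed into the implied constant, which gives the claim.

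None of these steps is a real obstacle. The only point needing care is the fibre count: one must use injectivity of multiplication by $P$ and then Lemma~\ref{lem:reciprocal}, with the divisor bound taken uniformly in $\kappa\le n+O_A(1)$. This is exactly why the sum is restricted to $g(0)\ne0$, so that $\deg g^*=\kappa$ and $T_g$ has centre exactly $d$.
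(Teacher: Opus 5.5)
Your proof is correct and follows the paper's route: you transport the sum to $T=Pgg^*\in\Rcal_d$, bound the fibres of $g\mapsto Pgg^*$ by $q^{o(n)}$, and apply Proposition~\ref{prop:reciprocal-incidence} using $N\le\frac43 d+O_A(1)$. The only cosmetic difference is the fibre count: the paper notes that any preimage $g$ is a monic divisor of $T/P$, giving at most $\tau(T/P)$ preimages, whereas you cancel $P$ and invoke Lemma~\ref{lem:reciprocal} to get at most $\tau(g)$; both bounds are $q^{o(n)}$.
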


\begin{proof}
Put $d=\kappa+m$.  Then the hypothesis implies
$N\le(4/3)d+O_A(1)$, so Proposition~\ref{prop:reciprocal-incidence} is
applicable.

For $g\in\Mcal^\times(\kappa)$, the polynomial
\[
        T=Pgg^*
\]
is reciprocal of centre $d$; indeed $P^*=P$ and $(gg^*)^*=gg^*$.  Hence
$T\in\Rcal_d$, and Lemma~\ref{lem:gap} gives
$\Delta_A(g;N)=\Delta(T;N)$.  We next bound the multiplicity of the map
$g\mapsto Pgg^*$.  Fix $T\in\Rcal_d$.  If \(T=0\), then there is no preimage,
because \(Pgg^*\ne0\).  Assume henceforth that \(T\ne0\).  If $P\nmid T$, there
is no preimage.  If $P\mid T$, then any preimage $g$ satisfies
\[
        g\mid gg^*=T/P.
\]
Thus $g$ is a monic divisor of the fixed non-zero polynomial $T/P$.  The divisor
estimate \eqref{eq:divisor}, uniformly in degree $O(n)$, gives at most
$q^{o(n)}$ such preimages.  Therefore
\[
\sum_{g\in\Mcal^\times(\kappa)}q^{\Delta_A(g;N)/2}
\le q^{o(n)}\sum_{T\in\Rcal_d}q^{\Delta(T;N)/2}
\ll_A q^{d+3N/8+o(n)}.
\]
Since $d=\kappa+m=\kappa+O_A(1)$, this is the desired estimate.
\end{proof}

\section{Completion of the Vaughan estimate}\label{sec:vaughan}

For $0\le k\le n$ put $N=n-k$ and
\[
        T_k=
        \sum_{g\in\Mcal(k)}
        \left|\sum_{h\in\Mcal(N)}\psi(Q_A(gh))\right| .
\]
Then $\Sigma_1=\sum_{0\le k\le u+v}T_k$.

\begin{lemma}\label{lem:typeI-pointwise-rank}
Let $g\in\Mcal(k)$ be non-zero.  For $h\in V_N$, the quadratic form
$h\mapsto Q_{A,2}(gh)$ has polar rank at least
\[
        \max\{0,N-k-m+1\}.
\]
Consequently
\[
        \left|\sum_{h\in\Mcal(N)}\psi(Q_A(gh))\right|
        \ll_A q^{N-\max(0,N-k-m)/2+O_A(1)} .
\]
\end{lemma}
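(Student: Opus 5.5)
The plan is to repeat the triangular-minor argument of Lemma~\ref{lem:typeII-rank}, now with the single Laurent symbol $A(z)g(z)g(z^{-1})$, and then feed the resulting rank bound into Lemma~\ref{lem:monic-slice}.

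\textbf{Step 1: the polar matrix is Toeplitz.} After the substitution $f=gh$, the half-polar form of $h\mapsto Q_{A,2}(gh)$ on $V_N$ is $B_A(gh,gw)=\CT\,A(z)g(z)g(z^{-1})h(z)w(z^{-1})$. In the basis $1,t,\dots,t^N$ its matrix is therefore Toeplitz, with entries $H_{b-a}$ for $0\le a,b\le N$, where
\[
H(z)=A(z)g(z)g(z^{-1})=\sum_\nu H_\nu z^\nu .
\]

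\textbf{Step 2: $H$ is nonzero with controlled support.} Since $g\ne0$, $A\ne0$ and $\F_q[z,z^{-1}]$ is an integral domain, $H\ne0$. The support of $g(z)g(z^{-1})$ lies in $[-k,k]$ and that of $A$ lies in $[-m,m]$, so $\operatorname{supp}H\subseteq[-k-m,k+m]$.

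\textbf{Step 3: a triangular minor.} Let $e$ be the maximal exponent in $\operatorname{supp}H$, so $|e|\le k+m$. The diagonal $b-a=e$ of the polar matrix has at least $N-k-m+1$ entries whenever this is positive, and every entry on it equals $H_e\ne0$. All diagonals with $b-a>e$ vanish. Take $R=\max\{0,N-k-m+1\}$ positions on the diagonal $b-a=e$, ordered by increasing row index, and form the $R\times R$ submatrix on those rows and columns. This submatrix is triangular with diagonal entries $H_e$, so its determinant is $H_e^R\ne0$. Hence the polar rank is at least $R$. As noted after \eqref{eq:polar}, passing from the half-polar form to the polar form $2B_A$ does not change the rank.

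\textbf{Step 4: the exponential sum.} The linear form $\ell$ turns into a linear form in $h$, and the quadratic part of $Q_A(gh)$ is exactly $Q_{A,2}(gh)$. Lemma~\ref{lem:monic-slice} applies on $\Mcal(N)$ with $r\ge\max\{0,N-k-m+1\}$ and gives the bound $q^{N-r/2+1}$. Since $\max(0,N-k-m+1)\ge\max(0,N-k-m)$, this is at most $q^{N-\max(0,N-k-m)/2+O_A(1)}$, which is the claimed estimate.

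Step 3 is essentially a copy of the argument in Lemma~\ref{lem:typeII-rank}, so no step here is a real obstacle. The only points that need care are checking that the diagonal count $\#\{a:0\le a,\,a+e\le N\}\ge N-|e|+1$ really is at least $N-k-m+1$, and making sure a possible factor $z^{r(g)}$ in $g$ causes no trouble. It does not, because $g(z)g(z^{-1})$ is unchanged when that factor is removed.
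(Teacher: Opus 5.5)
Your proposal is correct and follows the paper's proof essentially step for step. Both arguments take the Toeplitz polar matrix of the symbol $A(z)g(z)g(z^{-1})$, supported in $[-k-m,k+m]$, and extract a triangular minor along the extreme non-zero diagonal to get rank at least $N-k-m+1$. They then transfer the bound to $\Mcal(N)$ via Lemma~\ref{lem:monic-slice}, with the linear part contributing only a linear phase.
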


\begin{proof}
The half-polar form of $h\mapsto Q_{A,2}(gh)$ is
\[
        (h,w)\longmapsto \CT\,A(z)g(z)g(z^{-1})h(z)w(z^{-1}).
\]
Let
\[
        H(z)=A(z)g(z)g(z^{-1}).
\]
This Laurent polynomial is non-zero and is supported in $[-k-m,k+m]$.  Write
$D=k+m$.  Choose an exponent $e$ at one end of the support with coefficient
$H_e\ne0$; for definiteness take the largest such exponent.  In the coefficient
matrix with respect to the basis $1,t,\ldots,t^N$, the entry in row $a$ and
column $b$ is $H_{a-b}$ up to the harmless transpose convention.  Hence one of
the diagonals $a-b=e$ or $b-a=e$ consists of the non-zero constant $H_e$, and all
more extreme parallel diagonals are zero.

Because $|e|\le D$, this diagonal meets the $(N+1)\times(N+1)$ matrix in at
least $N-D+1$ positions when $N\ge D$.  Taking the rows and columns through
these positions, ordered along the diagonal, gives a triangular submatrix with
all diagonal entries equal to $H_e$.  Hence the polar rank on $V_N$ is at least
$N-D+1=N-k-m+1$ when this number is positive, and the asserted lower bound is
trivial otherwise.  Finally, Lemma~\ref{lem:monic-slice} transfers the estimate
to the monic affine slice; the linear part of $Q_A$ only adds a linear phase and
therefore does not change the polar rank.
\end{proof}

\begin{proposition}\label{prop:typeI-small-range}
Uniformly for $0\le k\le 9n/20+O_A(1)$,
\[
        T_k\ll_A q^{19n/20+o(n)} .
\]
\end{proposition}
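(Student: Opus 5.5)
Since \(T_k\) sums over all \(g\in\Mcal(k)\), which has \(q^k\) elements, and Lemma~\ref{lem:typeI-pointwise-rank} bounds each inner sum uniformly in \(g\), the plan is simply to multiply the pointwise bound by the number of \(g\). With \(N=n-k\), Lemma~\ref{lem:typeI-pointwise-rank} gives
\[
        T_k\ll_A q^{k}\,q^{N-\max(0,N-k-m)/2+O_A(1)}.
\]

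We first check that the positive branch of the maximum applies. For \(k\le 9n/20+O_A(1)\) we have \(N-k-m=n-2k-m\ge n/10-O_A(1)\), which is positive for large \(n\). The bound then becomes
\[
        T_k\ll_A q^{k+(n-k)-(n-2k-m)/2+O_A(1)}
        =q^{n/2+k+O_A(1)}.
\]
Inserting \(k\le 9n/20+O_A(1)\) gives \(q^{n/2+9n/20+O_A(1)}=q^{19n/20+O_A(1)}\). The \(O_A(1)\) in the exponent is a constant factor depending only on \(A\) and \(q\), so it is absorbed into \(q^{o(n)}\).

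The argument also needs uniformity in \(k\) and in \(g\). The implied constants in Lemma~\ref{lem:typeI-pointwise-rank} come from \(m\) and from the loss of \(2\) in the rank under monic restriction in Lemma~\ref{lem:monic-slice}, so they are independent of \(g\), \(k\) and the linear form \(\ell\). No step here is delicate; the only thing to verify is that the exponent arithmetic stays on the nontrivial side of the maximum, and it does once \(n\) is large.
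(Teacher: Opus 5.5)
Your proof is correct and is essentially the paper's argument: you multiply the pointwise bound of Lemma~\ref{lem:typeI-pointwise-rank} by the $q^k$ choices of $g\in\Mcal(k)$, which gives $T_k\ll_A q^{n/2+k+O_A(1)}\le q^{19n/20+O_A(1)}$. Incidentally, you do not need to check that the positive branch of the maximum is active, since $-\max(0,x)/2\le -x/2$ holds for every $x$.
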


\begin{proof}
In this range $N-k=n-2k\ge n/10-O_A(1)$, so the positive part in
Lemma~\ref{lem:typeI-pointwise-rank} is active for all sufficiently large $n$.
For each $g\in\Mcal(k)$,
\[
        \left|\sum_{h\in\Mcal(N)}\psi(Q_A(gh))\right|
        \ll_A q^{N-(N-k-m)/2+O_A(1)}
        \ll_A q^{N/2+k/2+O_A(1)} .
\]
There are exactly $q^k$ monic polynomials of degree $k$.  Therefore
\[
        T_k\ll_A q^k q^{N/2+k/2+O_A(1)}
        =q^{N/2+3k/2+O_A(1)}
        =q^{n/2+k+O_A(1)} .
\]
If $k\le9n/20+O_A(1)$, then $n/2+k\le19n/20+O_A(1)$, and the claimed bound
follows.
\end{proof}

\begin{proposition}\label{prop:typeI-large-range}
Uniformly for $3n/5-O_A(1)\le k\le 9n/10+O_A(1)$,
\[
        T_k\ll_A q^{19n/20+o(n)} .
\]
\end{proposition}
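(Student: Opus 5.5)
The plan is to apply Cauchy's inequality with $g$ as the outer (dual) variable, so that the quadratic form we must analyse is in $g$, which has the large degree $k$. The short variable $h$ then plays the role that $g_1,g_2$ played in the Type~II estimate. Write $N=n-k$; in the stated range $N\le 2n/5+O_A(1)$ and $k-N=2k-n\ge n/5-O_A(1)$.

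\textbf{Step 1: Cauchy and expansion.} Since $\#\Mcal(k)=q^k$, Cauchy's inequality gives
\[
T_k\le q^{k/2}\Bigl(\sum_{g\in\Mcal(k)}\Bigl|\sum_{h\in\Mcal(N)}\psi(Q_A(gh))\Bigr|^2\Bigr)^{1/2}.
\]
Expanding the square and interchanging summations, this is at most
\[
q^{k/2}\Bigl(\sum_{h_1,h_2\in\Mcal(N)}\Bigl|\sum_{g\in\Mcal(k)}\psi\bigl(Q_A(gh_1)-Q_A(gh_2)\bigr)\Bigr|\Bigr)^{1/2}.
\]

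\textbf{Step 2: rank of the inner sum in $g$.} The inner sum is over $g$ in the monic slice $\Mcal(k)$. Its phase is a quadratic form in $g$ plus a linear term, since the linear parts of $Q_A$ do not affect ranks. The quadratic part has Laurent symbol
\[
A(z)\bigl(h_1(z)h_1(z^{-1})-h_2(z)h_2(z^{-1})\bigr).
\]
Apply Lemma~\ref{lem:typeII-rank} with the roles exchanged: the variable lies in $V_k$ and the fixed factors are monic of degree $N$. The argument is symmetric in the two factors, because only the symbol matters. This gives polar rank at least $k-N-m+1$ whenever $h_1^*h_1\ne h_2^*h_2$.

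Lemma~\ref{lem:monic-slice} then bounds the inner sum by $q^{k-(k-N)/2+O_A(1)}=q^{(k+N)/2+O_A(1)}$. Summing over the $q^{2N}$ non-exceptional pairs $(h_1,h_2)$ gives a contribution of $q^{(k+5N)/2+O_A(1)}$.

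\textbf{Step 3: exceptional pairs.} When $h_1^*h_1=h_2^*h_2$, we bound the inner sum trivially by $q^k$. By Lemma~\ref{lem:reciprocal} and \eqref{eq:divisor}, for each $h_2$ there are at most $\tau(h_2)=q^{o(n)}$ admissible $h_1$. The exceptional contribution is therefore $q^{k+N+o(n)}$.

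\textbf{Step 4: collecting the bounds.} Combining Steps 2 and 3,
\[
T_k\ll_A q^{k/2}\bigl(q^{(k+5N)/4}+q^{(k+N)/2}\bigr)q^{o(n)}
=q^{5n/4-k/2+o(n)}+q^{n/2+k/2+o(n)},
\]
where we substituted $N=n-k$. The first exponent is at most $19n/20$ exactly when $k\ge 3n/5$. The second is at most $19n/20$ exactly when $k\le 9n/10$. This explains both endpoints of the range, and the $O_A(1)$ slack at either end is absorbed into $q^{o(n)}$.

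\textbf{Main obstacle.} The step needing the most care is Step~2, namely justifying the transposed use of Lemma~\ref{lem:typeII-rank}. There the variable ranges over $V_k$ with $k>N$, while the fixed multipliers $h_1,h_2$ have degree $N$. The point is that the extreme nonzero diagonal of the Toeplitz matrix lies at an index $|e|\le N+m$. That diagonal therefore meets the $(k+1)\times(k+1)$ matrix in at least $k-N-m+1$ positions, which is positive since $k-N\ge n/5-O_A(1)$. Everything else is routine bookkeeping.
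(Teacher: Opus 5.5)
Your proposal is correct and follows the paper's proof essentially step for step. You use Cauchy with $g$ as the outer variable, the rank bound $k-N-m+1$ for the difference symbol $A(z)(h_1(z)h_1(z^{-1})-h_2(z)h_2(z^{-1}))$ transferred to the monic slice, and the divisor bound for the pairs with $h_1^*h_1=h_2^*h_2$, arriving at the same exponents $5n/4-k/2$ and $n/2+k/2$; citing Lemma~\ref{lem:typeII-rank} with the degrees relabelled, instead of re-running the extreme-diagonal argument as the paper does, is a direct application rather than an obstacle, since that lemma imposes no relation between $i$ and $k$.
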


\begin{proof}
Put
\[
        S_g=\sum_{h\in\Mcal(N)}\psi(Q_A(gh)).
\]
Cauchy's inequality gives
\[
        T_k^2=\left(\sum_{g\in\Mcal(k)}|S_g|\right)^2
        \le q^k\sum_{g\in\Mcal(k)}|S_g|^2 .
\]
Expanding the second moment, and then interchanging the order of summation,
we obtain
\[
\sum_{g\in\Mcal(k)}|S_g|^2
=\sum_{h_1,h_2\in\Mcal(N)}
  \sum_{g\in\Mcal(k)}
  \psi(Q_A(gh_1)-Q_A(gh_2)).
\]
For a pair $(h_1,h_2)$ with $h_1^*h_1\ne h_2^*h_2$, the quadratic part in the
variable $g$ has Laurent symbol
\[
        A(z)\bigl(h_1(z)h_1(z^{-1})-h_2(z)h_2(z^{-1})\bigr).
\]
This symbol is non-zero and is supported in $[-N-m,N+m]$.  Applying the same
extreme-diagonal minor argument as in Lemma~\ref{lem:typeI-pointwise-rank}, but
now on the space of polynomials $g$ of degree at most $k$, gives rank at least
$\max\{0,k-N-m+1\}$.  Restricting to the monic slice in $g$ and using
Lemma~\ref{lem:monic-slice} yields
\[
        \left|\sum_{g\in\Mcal(k)}
        \psi(Q_A(gh_1)-Q_A(gh_2))\right|
        \ll_A q^{k-(k-N)/2+O_A(1)}
\]
for every non-exceptional pair.  Since there are $q^{2N}$ pairs
$(h_1,h_2)\in\Mcal(N)^2$, their total contribution to the second moment is
\[
        \ll_A q^{2N+k-(k-N)/2+o(n)}.
\]

For exceptional pairs satisfying $h_1^*h_1=h_2^*h_2$, Lemma~\ref{lem:reciprocal}
shows that, once $h_1$ is fixed, there are at most $\tau(h_1)=q^{o(n)}$ choices
for $h_2$.  For these pairs we use the trivial bound $q^k$ for the inner sum
over $g$.  Hence the exceptional contribution is
\[
        \ll q^N q^{o(n)}q^k=q^{N+k+o(n)}.
\]
Thus
\[
\sum_{g\in\Mcal(k)}|S_g|^2
\ll_A q^{2N+k-(k-N)/2+o(n)}+q^{N+k+o(n)} .
\]
Taking square roots after Cauchy's inequality gives
\[
        T_k
        \ll_A q^{k/2}\,q^{N+(k-(k-N)/2)/2+o(n)}
              +q^{k/2}\,q^{(N+k)/2+o(n)}.
\]
Since $N=n-k$, this is
\[
        T_k\ll_A q^{5n/4-k/2+o(n)}+q^{k+N/2+o(n)}.
\]
The first exponent is at most $19n/20+o(n)$ when
$k\ge3n/5-O_A(1)$, because $5n/4-k/2\le19n/20+O_A(1)$.  The second exponent is
$k+N/2=n/2+k/2$, which is at most $19n/20+O_A(1)$ when
$k\le9n/10+O_A(1)$.  This proves the proposition.
\end{proof}

\begin{proposition}\label{prop:typeI-central-range}
Uniformly for $9n/20-O_A(1)<k<3n/5+O_A(1)$,
\[
        T_k\ll_A q^{19n/20+o(n)} .
\]
\end{proposition}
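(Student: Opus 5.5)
We bound $T_k$ pointwise in $g$ by the radical dimension and then average that dimension with Proposition~\ref{prop:central-half}. For $g\in\Mcal(k)$, the quadratic form $h\mapsto Q_{A,2}(gh)$ on $V_N$ has polar rank $N+1-\Delta_A(g;N)$. Lemma~\ref{lem:monic-slice} (with Lemma~\ref{lem:quad}), together with the fact that the linear part of $Q_A$ only contributes a linear phase, gives
\[
\left|\sum_{h\in\Mcal(N)}\psi(Q_A(gh))\right|\ll q^{N/2+\Delta_A(g;N)/2+O(1)}.
\]
Hence
\[
T_k\ll q^{N/2+O(1)}\sum_{g\in\Mcal(k)}q^{\Delta_A(g;N)/2}.
\]

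To remove the zero factor, we write $g=z^rg_0$ with $g_0\in\Mcal^\times(\kappa)$ and $\kappa=k-r$. By \eqref{eq:Delta-zero-removal}, $\Delta_A(g;N)=\Delta_A(g_0;N)$, so
\[
T_k\ll q^{N/2+O(1)}\sum_{0\le r\le k}\ \sum_{g_0\in\Mcal^\times(k-r)}q^{\Delta_A(g_0;N)/2}.
\]
There are only $O(n)$ values of $r$, which are absorbed into $q^{o(n)}$. We split according to whether $N\le\frac43\kappa+O_A(1)$.

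\emph{Case $N\le\frac43\kappa+O_A(1)$.} Proposition~\ref{prop:central-half} applies and bounds the inner sum by $q^{\kappa+3N/8+o(n)}$. The contribution is therefore at most
\[
q^{\kappa+7N/8+o(n)}\le q^{k+7N/8+o(n)}=q^{n-N/8+o(n)}.
\]
Since $k<3n/5+O_A(1)$, we have $N>2n/5-O_A(1)$, so this is $\le q^{19n/20+o(n)}$.

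\emph{Case $N>\frac43\kappa+O_A(1)$, i.e.\ $\kappa<\frac34N$.} Here we use the pointwise rank bound of Lemma~\ref{lem:typeI-pointwise-rank} applied to $g_0$, which gives $\Delta_A(g_0;N)\le\kappa+m$. The inner sum is then at most $q^{\kappa}\,q^{(\kappa+m)/2}$, and the contribution is
\[
\ll q^{N/2+3\kappa/2+O_A(1)}\le q^{13N/8+O_A(1)}.
\]
Since $k>9n/20-O_A(1)$, we have $N<11n/20+O_A(1)$, so $13N/8\le 143n/160+O_A(1)<19n/20$ for large $n$.

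Combining the two cases and summing over $r$ gives $T_k\ll_A q^{19n/20+o(n)}$ throughout the central range. The main point needing care is the first case, and specifically the hypothesis $N\le\frac43\kappa+O_A(1)$ of Proposition~\ref{prop:central-half}. This is why small values of $\kappa$ (large $r$) are split off and treated by the pointwise bound. The numerology in both cases must be checked against the endpoints $9n/20$ and $3n/5$, and as computed above both cases stay below $19n/20$ there.
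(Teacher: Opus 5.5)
Your proof is correct and follows the paper's argument. You remove the factor $z^r$, then apply Proposition~\ref{prop:central-half} when $\kappa$ is large and the pointwise bound of Lemma~\ref{lem:typeI-pointwise-rank} when $\kappa$ is small. The only difference is that you split at $\kappa=3N/4$, the exact applicability condition of Proposition~\ref{prop:central-half}, rather than at the paper's $\kappa_0=3n/10+k/3$; this makes applicability automatic and removes the paper's separate subcase $\kappa>N$, with room to spare since $13N/8\le 143n/160+O_A(1)<19n/20$.
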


\begin{proof}
Every $g\in\Mcal(k)$ has a unique decomposition
\[
        g=z^r g_0,
        \qquad
        g_0\in\Mcal^\times(\kappa),
        \qquad
        \kappa=k-r.
\]
The factor $z^r$ cancels in the Laurent symbol $g(z)g(z^{-1})$, and therefore
\eqref{eq:Delta-zero-removal} gives
\[
        \Delta_A(g;N)=\Delta_A(g_0;N).
\]
We estimate the contribution for a fixed value of $r$, or equivalently a fixed
$\kappa$, and then sum over the $O(n)$ possible values of $r$.

Set
\[
        \kappa_0=\frac{3n}{10}+\frac{k}{3}.
\]
We first consider the subrange $\kappa>\kappa_0-O_A(1)$.  The hypothesis
$k>9n/20-O_A(1)$ implies
\[
\kappa_0-\frac34N
 =\frac{3n}{10}+\frac{k}{3}-\frac34(n-k)
 =\frac{13k}{12}-\frac{9n}{20}
 \ge -O_A(1),
\]
because $N=n-k$.  Hence
\[
        \kappa\ge\frac34N-O_A(1),
        \qquad\hbox{so}\qquad
        N\le\frac43\kappa+O_A(1).
\]
For each $g_0$, Lemmas~\ref{lem:quad} and \ref{lem:monic-slice} give
\[
        \left|\sum_{h\in\Mcal(N)}\psi(Q_A(z^r g_0h))\right|
        \ll_A q^{N/2+\Delta_A(g_0;N)/2+O_A(1)}.
\]
Summing this estimate over $g_0\in\Mcal^\times(\kappa)$ and using
Proposition~\ref{prop:central-half}, which is applicable by the inequality just
proved, we obtain
\[
\sum_{g_0\in\Mcal^\times(\kappa)}
        \left|\sum_{h\in\Mcal(N)}\psi(Q_A(z^r g_0h))\right|
        \ll_A q^{N/2}\sum_{g_0\in\Mcal^\times(\kappa)}
              q^{\Delta_A(g_0;N)/2}
        \ll_A q^{\kappa+7N/8+o(n)}.
\]
Since $\kappa\le k$, this is at most $q^{k+7N/8+o(n)}$.  Finally
\[
        k+\frac78N=k+\frac78(n-k)=\frac78n+\frac18k
        \le \frac{19n}{20}+O_A(1)
\]
throughout $k<3n/5+O_A(1)$.  Thus this part contributes
$O_A(q^{19n/20+o(n)})$ for each fixed $r$.

It remains to consider $\kappa\le\kappa_0+O_A(1)$.  Here we use the pointwise
rank bound.  Since the zero factor $z^r$ does not change the Laurent symbol,
Lemma~\ref{lem:typeI-pointwise-rank}, with $\kappa$ in place of $k$, gives for
fixed $\kappa$
\[
\sum_{g_0\in\Mcal^\times(\kappa)}
        \left|\sum_{h\in\Mcal(N)}\psi(Q_A(z^r g_0h))\right|
\ll_A q^{\kappa+N-\max(0,N-\kappa)/2+O_A(1)} .
\]
If $\kappa\le N$, then the exponent is $N/2+3\kappa/2+O_A(1)$, and hence
\[
        N/2+3\kappa/2
        \le N/2+3\kappa_0/2+O_A(1)
        =\frac{n-k}{2}+\frac32\left(\frac{3n}{10}+\frac{k}{3}\right)+O_A(1)
        =\frac{19n}{20}+O_A(1).
\]
If $\kappa>N$, then such a $\kappa\le\kappa_0+O_A(1)$ can occur only if
\[
        \kappa_0>N-O_A(1),
\]
which is equivalent to
\[
        \frac{3n}{10}+\frac{k}{3}>n-k-O_A(1),
        \qquad\hbox{or}\qquad
        k\ge\frac{21n}{40}-O_A(1).
\]
In this case the pointwise estimate is bounded by $q^{\kappa+N+O_A(1)}$, and
\[
        \kappa+N
        \le \kappa_0+N+O_A(1)
        =\frac{3n}{10}+\frac{k}{3}+n-k+O_A(1)
        =\frac{13n}{10}-\frac{2k}{3}+O_A(1)
        \le \frac{19n}{20}+O_A(1),
\]
where the last inequality uses $k\ge21n/40-O_A(1)$.  Therefore this second part
also contributes $O_A(q^{19n/20+o(n)})$ for each fixed $r$.

Summing over the $O(n)$ possible values of $r$ only changes the estimate by a
factor absorbed into $q^{o(n)}$.
\end{proof}

\begin{proposition}\label{prop:typeI}
For the Vaughan cutoffs
\[
        u=\left\lfloor\frac n5\right\rfloor,
        \qquad
        v=\left\lfloor\frac{7n}{10}\right\rfloor,
\]
and $\Psi(f)=\psi(Q_A(f))$ with $\psi$ non-trivial, one has
\[
        \Sigma_1\ll_A q^{19n/20+o(n)} .
\]
\end{proposition}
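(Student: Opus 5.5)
With the cutoffs $u=\lfloor n/5\rfloor$ and $v=\lfloor 7n/10\rfloor$ we have $u+v\le 9n/10$ and $u+v<n$. So $\Sigma_1=\sum_{0\le k\le u+v}T_k$ runs over $0\le k\le 9n/10$. The plan is simply to cover this interval by the three ranges already treated. Propositions~\ref{prop:typeI-small-range}, \ref{prop:typeI-central-range} and \ref{prop:typeI-large-range} apply respectively on
\[
0\le k\le 9n/20+O_A(1),\qquad 9n/20-O_A(1)<k<3n/5+O_A(1),\qquad 3n/5-O_A(1)\le k\le 9n/10+O_A(1).
\]
These overlap and together contain every integer $k\in[0,u+v]$. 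For each such $k$ one of them gives $T_k\ll_A q^{19n/20+o(n)}$, uniformly in $k$ and in the linear form $\ell_n$.

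Before applying them, I will check that the constants $O_A(1)$ in the three statements can be taken compatibly, so that no integer $k$ falls into a gap. Each proposition is stated with an arbitrary fixed $O_A(1)$ slack on both sides, so we may take the ranges to be $[0,9n/20+1]$, $(9n/20-1,3n/5+1)$ and $[3n/5-1,9n/10+1]$. These clearly cover $[0,9n/10]$. The upper endpoint $u+v\le 9n/10$ keeps us inside the hypothesis of Proposition~\ref{prop:typeI-large-range}, whose second term $q^{n/2+k/2}$ would exceed $q^{19n/20}$ beyond $k\approx 9n/10$. This is exactly why $u+v$ must not exceed $9n/10$.

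Finally, sum over $k$. There are at most $n+1$ values, and each contributes $\ll_A q^{19n/20+o(n)}$ with a uniform implied $o(n)$, since all these propositions use the uniform convention for $q^{o(n)}$ from Section~\ref{sec:prelim}. The factor $n+1$ is absorbed into $q^{o(n)}$, giving $\Sigma_1\ll_A q^{19n/20+o(n)}$. The only point needing care, and the nearest thing to an obstacle, is the endpoint bookkeeping: confirming that the $O_A(1)$ fudges in the three propositions genuinely overlap, and that the floor functions in $u,v$ keep $u+v\le 9n/10$. Neither step requires any new estimate.
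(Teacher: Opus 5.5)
Your proposal is correct and is essentially the paper's proof: the paper likewise notes that Propositions~\ref{prop:typeI-small-range}, \ref{prop:typeI-central-range} and \ref{prop:typeI-large-range} together cover every integer $0\le k\le u+v=9n/10+O(1)$, and then sums the resulting bounds $T_k\ll_A q^{19n/20+o(n)}$ over $O(n)$ values of $k$. Your extra bookkeeping is the same argument made explicit: you fix the $O_A(1)$ slacks to be $1$ and use the floors to get $u+v\le 9n/10$.
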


\begin{proof}
For these cutoffs, $u+v=9n/10+O(1)$.  The three estimates
Propositions~\ref{prop:typeI-small-range}, \ref{prop:typeI-central-range}, and
\ref{prop:typeI-large-range} cover all integers $0\le k\le u+v$.  Summing the
corresponding bounds for $T_k$ over $O(n)$ values of $k$ gives the claim.
\end{proof}

\begin{proof}[Proof of Theorem \ref{thm:main}]
Let $\psi$ be a non-trivial additive character and choose
\[
        u=\left\lfloor\frac n5\right\rfloor,
        \qquad v=\left\lfloor\frac{7n}{10}\right\rfloor .
\]
Then $u+v=9n/10+O(1)<n$ for large $n$.  Set
\[
        \mathcal V_n(\psi)=
        \sum_{f\in\Mcal(n)}\Lambda_{\rm vM}(f)\psi(Q_A(f)).
\]
Lemma~\ref{lem:vaughan}, Proposition~\ref{prop:typeI}, and
Proposition~\ref{prop:typeII} give
\begin{align*}
\mathcal V_n(\psi)
&\ll_A q^{19n/20+o(n)} \\
&\quad +q^{n-(u+v)/2+o(n)}
 \left(q^{n-u}+q^{3n/2-v}\right)^{1/2}.
\end{align*}
We check the exponent of the second term.  Since
$u=n/5+O(1)$ and $v=7n/10+O(1)$,
\[
        n-\frac{u+v}{2}+\frac{n-u}{2}
        =\frac{3n}{2}-u-\frac v2+O(1)
        =\frac{19n}{20}+O(1),
\]
and
\[
        n-\frac{u+v}{2}+\frac{3n/2-v}{2}
        =\frac{7n}{4}-\frac u2-v+O(1)
        =\frac{19n}{20}+O(1).
\]
Thus
\[
        \mathcal V_n(\psi)\ll_A q^{19n/20+o(n)}.
\]
As noted above, the bound is uniform for all non-trivial additive characters
\(\psi\).

We now remove the von Mangoldt weight.  If $f\in\Pcal(n)$, then
$\Lambda_{\rm vM}(f)=n$.  A prime power of degree $n$ which is not prime has
the form $\pi^j$ with $j\ge2$ and $\deg\pi=n/j\le n/2$.  By the prime polynomial theorem over finite fields (see, e.g., \cite[Ch.~3]{LidlNiederreiter}), the number of such
prime powers is $O(q^{n/2})$, and each has von Mangoldt weight at most $n$;
hence their total contribution to the weighted sum is $O(nq^{n/2})$.  Therefore
\[
        \sum_{f\in\Mcal(n)}\Lambda_{\rm vM}(f)\psi(Q_A(f))
        =n\sum_{f\in\Pcal(n)}\psi(Q_A(f))+O(nq^{n/2}).
\]
Dividing by $n$ gives, for every non-trivial $\psi$,
\[
        \sum_{f\in\Pcal(n)}\psi(Q_A(f))
        \ll_A q^{19n/20+o(n)}.
\]
Finally use additive-character orthogonality on $\F_q$:
\[
\#\{f\in\Pcal(n):Q_A(f)=\gamma\}
=\frac1q\sum_{\psi}\psi(-\gamma)
        \sum_{f\in\Pcal(n)}\psi(Q_A(f)).
\]
The trivial character contributes $\#\Pcal(n)/q$, while the $q-1$ non-trivial
characters are bounded by the estimate just proved.  This gives the asserted
formula.
\end{proof}

\appendix
\section{A low-rank obstruction}\label{sec:lowrank}

The following example shows that endpoint-diagonal pointwise rank bounds can be essentially sharp; hence in the central Type I range such bounds alone do not deliver the required saving, and an averaged incidence estimate is needed.

\begin{example}\label{ex:lowrank}
Let $q=3$, $Q(f)=S^{(1)}(f)$, and $a(t)=t^4-1$.  For
\[
        h(t)=x_0+x_1t+\cdots+x_6t^6,
\]
the quadratic form $h\mapsto S^{(1)}(ah)$ has matrix, up to a non-zero scalar,
\[
\begin{pmatrix}
0&1&0&1&0&1&0\\
1&0&1&0&1&0&1\\
0&1&0&1&0&1&0\\
1&0&1&0&1&0&1\\
0&1&0&1&0&1&0\\
1&0&1&0&1&0&1\\
0&1&0&1&0&1&0
\end{pmatrix}.
\]
Over $\F_3$ this matrix has rank $2$.
\end{example}

\end{document}